\definecolor{red}{rgb}{1,0,0}
\definecolor{blue}{rgb}{0,0,.7}
\definecolor{green}{rgb}{0,.6,0}
\definecolor{purp}{rgb}{.5,0,.5}
\numberwithin{figure}{section}   
\newtheorem{thm}{Theorem}
\newtheorem{cor}[thm]{Corollary}
\newtheorem{prop}[thm]{Proposition}
\theoremstyle{definition}
\theoremstyle{definition}
\theoremstyle{definition}
\newcommand{\Z}{\operatorname{Z}}
\newcommand{\zbar}{\overline{\Z}}
\newcommand{\bit}{\begin{itemize}}
\newcommand{\eit}{\end{itemize}}
\newcommand{\ben}{\begin{enumerate}}
\newcommand{\een}{\end{enumerate}}
\newcommand{\beq}{\begin{equation}}
\newcommand{\eeq}{\end{equation}}
\newcommand{\bea}{\begin{eqnarray*}} 
\newcommand{\eea}{\end{eqnarray*}}
\newcommand{\bpf}{\begin{proof}}
\newcommand{\epf}{\end{proof}\ms}
\newcommand{\bmt}{\begin{bmatrix}}
\newcommand{\emt}{\end{bmatrix}}
\newcommand{\ms}{\medskip}
\newcommand{\noi}{\noindent}
\title{Minimal Zero Forcing Sets}
\author{Boris Brimkov
\thanks{Department of Mathematics and Statistics, Slippery Rock University, Slippery Rock, PA, USA (boris.brimkov@sru.edu)}\and Joshua Carlson
\thanks{Department of Mathematics and Computer Science, Drake University, Des Moines, IA, USA (joshua.carlson@drake.edu)}
}
\date{}
\begin{document}
\maketitle
\begin{abstract}
In this paper, we study minimal (with respect to inclusion) zero forcing sets. We first investigate when a graph can have polynomially or exponentially many distinct minimal zero forcing sets. We also study the maximum size of a minimal zero forcing set $\zbar(G)$, and relate it to the zero forcing number $\Z(G)$. Surprisingly, we show that the equality $\zbar(G)=\Z(G)$ is preserved by deleting a universal vertex, but not by adding a universal vertex. We also characterize graphs with extreme values of $\zbar(G)$ and explore the gap between $\zbar(G)$ and $\Z(G)$.
\end{abstract}

\noi {\bf Keywords} Zero forcing; minimal zero forcing set; universal vertex

\noi{\bf AMS subject classification} 05C15, 05C57, 05C76

\section{Introduction}


Given a simple undirected graph $G=(V,E)$, each of whose vertices is colored blue or white, the \emph{zero forcing color change rule} says that at each timestep, a blue vertex with exactly one white neighbor causes  that neighbor to become blue. If $S\subset V$ is a set of blue vertices in $G$, the \emph{closure} of $S$, denoted $\emph{cl}(S)$, is the set of blue vertices obtained after the color change rule is applied until no more white vertices can be turned blue. A set $S$ is a \emph{zero forcing set} if $cl(S)=V$; the \emph{zero forcing number} of $G$, denoted $\Z(G)$, is the minimum cardinality of a zero forcing set.  

Zero forcing was introduced in \cite{AIM-Workshop} as a bound on the minimum rank over all symmetric matrices whose entries have the same off-diagonal zero-nonzero pattern as the adjacency matrix of a graph $G$. This minimum rank problem is a special case of the matrix completion problem which has numerous theoretical and practical applications (such as the million-dollar Netflix challenge \cite{netflix}). Zero forcing is also related to other processes that arise from the observation that knowing the values of all-but-one variables in a linear equation causes the last remaining variable to be known. In particular, processes that are equivalent or very similar to zero forcing were independently introduced in quantum physics (quantum control theory \cite{quantum1}), theoretical computer science (fast-mixed searching \cite{fast_mixed_search}), electrical engineering (PMU placement \cite{BruneiHeath,powerdom3}), and combinatorial optimization (target set selection problem \cite{target1,target3,target2}). Zero forcing has also found a variety of uses in physics, logic circuits, coding theory, and in modeling the spread of diseases and information in social networks; see \cite{zf_tw,quantum1,logic1,zf_np} and the bibliographies therein.

In this paper, we study minimal (rather than minimum) zero forcing sets. Specifically, we study when a graph can have polynomially or exponentially many distinct minimal zero forcing sets. We also investigate the maximum size of a minimal zero forcing set, and relate it to the zero forcing number. Maximum minimal sets and minimum maximal sets have been studied in the context of many other graph parameters, including independent sets (see \cite{indep2, indep1}), dominating sets (see \cite{minmaxdom2,minmaxdom1}), matchings (see \cite{minmaxmatching1,minmaxmatching2}), and vertex covers (see \cite{vertCover1,vertCover2}). Studying minimal zero forcing sets can lead to a better understanding of the zero forcing process, e.g., in the context of zero forcing polynomials \cite{zfp} and zero forcing reconfiguration graphs \cite{geneson}.

This paper is organized as follows. In the remainder of this section, we recall some graph theoretic notions, specifically those related to zero forcing. In Section \ref{sec:number}, we study the effect of various graph properties on the number of minimal zero forcing sets. In Section \ref{sec:max_min}, we investigate the maximum size of a minimal zero forcing set and its relation to the zero forcing number. We conclude with some final remarks and open questions in Section \ref{conclusion}.

\subsection{Preliminaries}
\label{sec:Preliminaries}

A simple graph $G=(V,E)$ consists of a vertex set $V$ and an edge set $E$ of two-element subsets of $V$.  The \emph{order} of $G$ is denoted by $n=|V|$. Two vertices $v,w\in V$ are \emph{adjacent}, or \emph{neighbors}, if $\{v,w\}\in E$; this is denoted $v\sim w$. The \emph{neighborhood} of $v\in V$ is the set of all vertices which are adjacent to $v$, denoted $N(v)$; the \emph{closed neighborhood} of $v$, denoted $N[v]$, is the set $N(v)\cup\{v\}$. The \emph{degree} of $v\in V$ is defined as $\deg(v)=|N(v)|$. The minimum degree of $G$ is denoted $\delta(G)$ and the maximum degree is denoted $\Delta(G)$. A \emph{leaf} is a vertex of degree 1 and a \emph{universal vertex} is a vertex of degree $|V|-1$, i.e., a vertex that is adjacent to all other vertices. Given $S \subset V$, the \emph{induced subgraph} $G[S]$ is the subgraph of $G$ whose vertex set is $S$ and whose edge set consists of all edges of $G$ which have both endpoints in $S$. 

The complete graph on $n$ vertices is denoted by $K_n$, the cycle on $n$ vertices is denoted by $C_n$, and the empty graph on $n$ vertices is denoted by $\overline{K}_n$. A \emph{tree} is a connected acyclic graph. A \emph{branchpoint} of a tree is a vertex of degree at least 3. A tree with a single branchpoint is called a \emph{spider}. The branchpoint $v$ of a spider is also called the \emph{center} vertex, and the \emph{legs} of a spider $G$ with branchpoint $v$ are the connected components of $G-v$. The \emph{length} of a leg of a spider is the number of vertices in the leg. The graph $S_{a_1,\ldots,a_k}$ is a spider whose legs have lengths $a_1,\ldots,a_k$. The \emph{corona} of graphs $G$ and $H$, denoted $G\circ H$, is the graph obtained by joining all vertices of a copy of $H$ with each vertex of $G$. The \emph{join} of disjoint graphs $G$ and $H$, denoted $G\vee H$, is the graph obtained by adding an edge between every vertex of $G$ and every vertex of $H$. The \emph{wheel} on $n$ vertices is defined as $W_n=C_{n-1}\vee K_1$ and the \emph{star} on $n$ vertices is defined as $S_n=\overline{K}_{n-1}\vee K_1$. 

A connected component of $G$ is called \emph{trivial} if it consists of a single vertex; otherwise it is called  \emph{nontrivial}. The disjoint union of graphs $G_1$ and $G_2$ is denoted $G_1 \dot\cup G_2$, and $kG = \dot\bigcup_{i=1}^k G$. An \emph{isomorphism} between graphs $G$ and $H$ is a bijection $f\colon V(G)\to V(H)$ such that vertices $u$ and $v$ are adjacent in $G$ if and only if $f(u)$ and $f(v)$ are adjacent in $H$. If graphs $G$ and $H$ are isomorphic, we will write $G\cong H$. An \emph{automorphism} is an isomorphism from $G$ to itself. A graph $G$ is \emph{vertex transitive} if for any two vertices $v_1$ and $v_2$ of $G$, there is some automorphism $f\colon G\to G$
such that $f(v_{1})=v_{2}$. For other graph theoretic terminology and definitions, we refer the reader to~\cite{west}.  

A \emph{fort} of a graph $G=(V,E)$ is a non-empty set $F\subset V$ such that no vertex outside $F$ is adjacent to exactly one vertex in $F$. It was shown in \cite{brimkov_zf1} that every zero forcing set of a graph intersects every fort of the graph.
The set of all forts of $G$ is denoted as $\mathcal{B}(G)$; when there is no scope for confusion, dependence on $G$ will be omitted. For a zero forcing set $S \subset V(G)$, an ordered list of forces that can be performed in sequence to color $V(G)$ blue is called a \emph{chronological list of forces} of $S$. Given an arbitrary subset $S \subset V(G)$, a set of forces that can be performed (in some order) to color $cl(S)$ blue is called a \emph{set of forces of $S$}. For a set of forces, $\mathcal{F}$, of $S \subset V(G)$, the \emph{terminus of $\mathcal{F}$} is the set of vertices in $V(G)$ that do not perform a force in $\mathcal{F}$. The terminus of an arbitrary set of forces of a subset $S \subset V(G)$ is called a \emph{reversal of $S$}. It is easy to see that every reversal of a zero forcing set $S$ is also a zero forcing set of the same size as $S$. The maximum size of a minimal zero forcing set of $G$ is denoted $\zbar(G)$. \\

\section{Number of minimal zero forcing sets}\label{sec:number}
In this section, we investigate the effect (or lack thereof) of several graph properties on the number of minimal zero forcing sets. We begin with a general characterization using $\zbar(G)$.

\begin{prop}
If $\zbar(G)=O(1)$, then $G$ has a polynomial number of minimal zero forcing sets. If $\zbar(G)=\Omega(n)$, then $G$ could have either a polynomial or an exponential number of minimal zero forcing sets.
\end{prop}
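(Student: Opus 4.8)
The plan is to treat the two assertions separately, as they are of quite different character. For the first, the point is simply that the hypothesis $\zbar(G) = O(1)$ forces every minimal zero forcing set to be a subset of $V(G)$ of size at most some absolute constant $c$. The number of subsets of $V(G)$ of cardinality at most $c$ is at most $\sum_{i=1}^{c}\binom{n}{i} \le c\, n^{c}$, which is polynomial in $n = |V(G)|$; in particular the number of minimal zero forcing sets is polynomial. This needs nothing beyond the definition of $\zbar(G)$.

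For the second assertion I would exhibit two infinite families of graphs, each satisfying $\zbar(G) = \Omega(n)$, one realizing a polynomial count and one realizing an exponential count. For the polynomial example I would use $K_n$: since $\Z(K_n) = n-1$ and any $(n-2)$-subset leaves two white vertices (so every blue vertex has two white neighbors and no force is possible), the minimal zero forcing sets of $K_n$ are exactly the $n$ sets $V(K_n) \setminus \{v\}$, giving $\zbar(K_n) = n-1 = \Omega(n)$ and precisely $n$ minimal zero forcing sets. For the exponential example I would use $G = \tfrac{n}{2}K_2$ (for $n$ even), the disjoint union of $n/2$ edges. The one structural fact needed is that in a disjoint union of graphs, a vertex set is a zero forcing set if and only if its restriction to each component is a zero forcing set of that component, and consequently it is a \emph{minimal} zero forcing set if and only if each such restriction is a minimal zero forcing set of the corresponding component; this is immediate because forces in one component never involve vertices of another. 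Since $K_2$ has exactly two minimal zero forcing sets (its two singletons), $\tfrac{n}{2}K_2$ has $2^{n/2}$ minimal zero forcing sets, all of size $n/2$, so $\zbar\big(\tfrac{n}{2}K_2\big) = n/2 = \Omega(n)$.

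I do not anticipate a real obstacle: the first part is a counting bound and the second is a pair of explicit constructions. The only items requiring a line of justification are the behavior of minimal zero forcing sets under disjoint union and the elementary verification that deleting a vertex from an $(n-1)$-subset of $K_n$ destroys the zero forcing property. If one insists on a connected example for the exponential case, a similar effect can be obtained by gluing many small gadgets at a common vertex, but since the preliminaries explicitly permit disconnected graphs, $\tfrac{n}{2}K_2$ suffices.
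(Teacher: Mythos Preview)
Your proof is correct and follows the same overall structure as the paper's: the counting bound for the first assertion and the use of $K_n$ as the polynomial example are identical. The only difference is the exponential example: the paper uses the connected graph $K_{n/2}\circ K_1$ (the corona attaching a pendant to each vertex of $K_{n/2}$) and argues that choosing $n/4$ leaves together with the $n/4$ non-leaves not adjacent to them gives a minimal zero forcing set, whereas you use the disconnected $\tfrac{n}{2}K_2$ and the componentwise characterization of minimal zero forcing sets. Your construction is simpler and its verification is cleaner; the paper's example has the incidental advantage of being connected, though the statement does not require this.
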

\proof
For every minimal zero forcing set $S$ of $G$, $|S|\leq \zbar(G)$. There are ${n \choose 1}+{n \choose 2}+\ldots+{n \choose \zbar(G)}$ subsets of $V(G)$ of size at most $\zbar(G)$. If $\zbar(G)=k$ for some constant $k$, then ${n \choose 1}+{n \choose 2}+\ldots+{n \choose \zbar(G)}=O(n^k)$ and therefore, the number of minimal zero forcing sets of $G$ is polynomial. Next, $\zbar(K_n)=\Omega(n)$ and $K_n$ has polynomially many minimal zero forcing sets, since each minimal zero forcing set consists of $n-1$ vertices of $K_n$. Finally, $\zbar(K_{n/2}\circ K_1)=\Omega(n)$ and $K_{n/2}\circ K_1$ has exponentially many minimal zero forcing sets, since any set $S$ consisting of $n/4$ leaves and $n/4$ non-leaves that are not adjacent to any of the $n/4$ leaves is zero forcing (as each leaf in $S$ can force its white neighbor, and then each non-leaf in $S$ can force its white neighbor) and minimal (as deleting any element of $S$ will cause it to not be a zero forcing set).
\qed
\vspace{9pt}

\noindent Next we show that the number of minimal zero forcing sets is not determined by whether the graph is a tree. 

\begin{prop}
\label{prop_trees}
Some trees have polynomially many minimal zero forcing sets; some trees have exponentially many minimal zero forcing sets. 
\end{prop}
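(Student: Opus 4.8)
The plan is to exhibit two families of trees: one with polynomially many minimal zero forcing sets, and one with exponentially many. For the polynomial side, the path $P_n$ is the natural candidate. It is well known that $\Z(P_n)=1$, and in fact the only zero forcing sets of size $1$ are the two endpoints; more strongly, I claim every minimal zero forcing set of $P_n$ is a single endpoint. To see this, note that if $S$ contains any vertex $v$, then $v$ together with everything ``downstream'' on one side will eventually be forced, so a single endpoint already forces the whole path; hence any zero forcing set containing an endpoint and another vertex is not minimal, and any zero forcing set avoiding both endpoints must still contain enough vertices to be non-minimal. Thus $P_n$ has exactly two minimal zero forcing sets, which is $O(1)$, hence certainly polynomial. (Alternatively, one may invoke the previous proposition: a family with $\zbar = O(1)$ has polynomially many minimal zero forcing sets, and one checks $\zbar(P_n)$ is bounded — but the direct argument is cleaner.)

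For the exponential side, I would use a ``spider of paths'' or, more simply, a suitable subdivided star / caterpillar-type construction. A clean choice is to take a long path and attach a pendant leaf at every other internal vertex — i.e., a caterpillar whose spine has length $\Theta(n)$ with $\Theta(n)$ legs of length $1$. For such a tree, each leg together with its attachment point gives a local choice: one can include the pendant leaf in the forcing set (letting the spine be forced through that point) or instead include a nearby spine vertex, and these choices can be made independently across the $\Theta(n)$ legs. Making this precise, I would describe an explicit family $\mathcal{S}$ of $2^{\Theta(n)}$ subsets, verify each is a zero forcing set by exhibiting a chronological list of forces (each chosen spine/leaf vertex forces along its local branch, and the spine gets swept out), and verify minimality by checking that removing any vertex of such a set creates a fort disjoint from the remaining set (using the fact from \cite{brimkov_zf1} that a zero forcing set must hit every fort). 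The pendant leaves and their neighborhoods furnish the obstructing forts: a singleton leaf, or a two-element set consisting of a leaf and... , is a fort whenever the appropriate neighbors are white.

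The main obstacle is the minimality verification on the exponential family: one must ensure that the $2^{\Theta(n)}$ sets are genuinely \emph{minimal}, not merely zero forcing, which requires a careful local analysis showing that every vertex in such a set is essential. The cleanest route is to pin down, for each vertex $v$ in a candidate set $S$, an explicit fort $F_v$ with $F_v \cap S = \{v\}$; then $S \setminus \{v\}$ misses $F_v$ and is not zero forcing. Choosing the legs to be pairwise ``far apart'' along the spine keeps these forts from interacting, so the construction goes through. I expect the forcing-set direction (exhibiting the chronological list of forces) to be routine, and the sizing $|\mathcal{S}| = 2^{\Theta(n)}$ to be immediate from the independent binary choice at each of the $\Theta(n)$ legs.
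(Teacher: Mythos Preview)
Your polynomial-side argument contains a concrete error. It is \emph{not} true that every minimal zero forcing set of $P_n$ is a single endpoint: for any internal index $2\le i\le n-2$, the pair $\{i,i+1\}$ is a zero forcing set (it forces outward in both directions), and it is minimal since neither singleton can start a force. So $P_n$ has $n-1$ minimal zero forcing sets, not two. The conclusion (polynomially many) survives, and your parenthetical alternate route via $\zbar(P_n)\le 2$ is actually the correct justification. The paper sidesteps this by using the star $S_n$, where every minimal zero forcing set is an all-but-one set of leaves; this avoids the internal-pair issue entirely.

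On the exponential side, your caterpillar-with-pendant-leaves idea is plausible but, as you acknowledge, not carried out: you never specify the $2^{\Theta(n)}$ sets, and the phrase ``include the pendant leaf \dots\ or instead include a nearby spine vertex'' is not yet a construction. With legs of length~$1$ the binary choices are not obviously independent, because each branchpoint has degree~$3$ and the spine couples all the choices; the fort argument you outline (``a singleton leaf, or a two-element set consisting of a leaf and \dots'') is left unfinished precisely at the delicate point. The paper instead takes the spider $S_{5,5,\ldots,5}$ with legs $a_ib_ic_id_ie_i$ and, in each leg but one, picks either $\{b_i,c_i\}$ or $\{c_i,d_i\}$. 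The advantage is that this pair already forces the \emph{entire} leg in both directions, so the zero-forcing check is local and immediate, and minimality is equally local: dropping either vertex from the chosen pair strands that leg. Your caterpillar can be made to work with a careful explicit choice (e.g.\ at each pendant pick the leaf or the preceding odd spine vertex, plus one anchor at the end), but the paper's spider makes both verifications one-line checks rather than an inductive sweep along the spine.
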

\proof
Let $S_{5,5,\ldots,5}$ be a spider with center $v$ and $\frac{n-1}{5}$ legs of length 5. For $1\leq i\leq\frac{n-1}{5}$, let leg $i$ consist of vertices $a_i$, $b_i$, $c_i$, $d_i$, and $e_i$, where $a_i$ is adjacent to $v$, $b_i$ to $a_i$, $c_i$ to $b_i$, $d_i$ to $c_i$, and $e_i$ to $d_i$. For each $I\subset \{1,\ldots,\frac{n-1}{5}\}$ and $j\in \{1,\ldots,\frac{n-1}{5}\}$, the set  $S(I,j)=\bigcup_{i\in I, i\neq j}\{b_i,c_i\}\cup\bigcup_{i\notin I,i\neq j}\{c_i,d_i\}$ is zero forcing set, since in each leg $i$ different from $j$, either the vertices $\{b_i,c_i\}$ or $\{c_i,d_i\}$ are contained in the set, and those vertices will force the entire leg $i$; then, after all legs $i\neq j$ are colored blue, leg $j$ will be forced by the center $v$. Moreover, the set $S(I,j)$ is minimal, since if any vertex $u$ in leg $i\neq j$ is omitted, the leg $i$ cannot be colored. Thus, there are $\Omega(2^{n/5})$ minimal zero forcing sets in $S_{5,5,\ldots,5}$. On the other hand, a star $S_n$ has a polynomial number of minimal zero forcing sets, since any minimal zero forcing set of $S_n$ consists of all-but-one leaves. 
\qed
\vspace{9pt}

\noindent Next we show that the number of minimal zero forcing sets in a tree is not determined by the number of leaves or branchpoints. 

\begin{prop}
A tree with an exponential number of minimal zero forcing sets can have the same number of leaves, branchpoints, and vertices as a tree with a polynomial number of minimal zero forcing sets. 
\end{prop}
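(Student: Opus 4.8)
The plan is to exhibit two trees on the same number $n$ of vertices with the same number of leaves and the same number of branchpoints, where one has exponentially many minimal zero forcing sets and the other has only polynomially many. The natural building blocks are the two families already used in Proposition \ref{prop_trees}: the spider $S_{5,5,\dots,5}$, which was shown there to have $\Omega(2^{n/5})$ minimal zero forcing sets, and something star-like, which has polynomially many. The obstacle is that $S_{5,5,\dots,5}$ has exactly one branchpoint, while a star also has exactly one branchpoint but the wrong number of leaves and vertices; so I need to engineer a polynomial-count tree with exactly one branchpoint, $(n-1)/5$ leaves, and $n$ vertices. The obvious candidate is the spider $S_{1,1,\dots,1,L}$ with $(n-1)/5 - 1$ legs of length $1$ and one long leg of length $L = n - 1 - ((n-1)/5 - 1)$; this has $n$ vertices, exactly $(n-1)/5$ leaves (the $(n-1)/5-1$ short legs plus the end of the long leg), and exactly one branchpoint, matching the spider $S_{5,5,\dots,5}$ on all three counts.

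So the first step is to fix $n$ with $5 \mid n-1$, set $k = (n-1)/5$, and define $T_1 = S_{5,5,\dots,5}$ ($k$ legs of length $5$) and $T_2 = S_{\underbrace{1,\dots,1}_{k-1},\, 5k - k + 1}$, i.e.\ a spider with $k-1$ legs of length $1$ and one leg of length $4k+1$. A quick check confirms both have $n$ vertices, $k$ leaves, and $1$ branchpoint. The second step is to quote Proposition \ref{prop_trees} (or re-invoke its argument) for $T_1$: it has $\Omega(2^{n/5})$ minimal zero forcing sets. The third step is to bound the number of minimal zero forcing sets of $T_2$ from above by a polynomial in $n$. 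Since $T_2$ is a spider with one long leg and the rest pendant leaves, $\Z(T_2)$ is small (essentially $k$: all but one pendant leaf must be included, plus one more vertex to get the long leg started, by the standard formula for zero forcing of trees/spiders), so $\zbar(T_2)$ is at most some linear function of $n$ — but that alone is not enough for a polynomial bound via the first proposition, since $\zbar = \Omega(n)$ is consistent with exponentially many sets. Instead I would argue directly: a minimal zero forcing set $S$ of $T_2$ must contain at least $k-1$ of the $k-1$ pendant leaves adjacent to the center together with at least one vertex capable of initiating forcing along the long leg, and by minimality $S$ cannot be much larger; a careful case analysis on which vertices of the long leg lie in $S$ shows there are only polynomially (in fact linearly or quadratically) many choices, hence polynomially many minimal zero forcing sets.

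The main obstacle is the last step: getting a clean polynomial upper bound on the number of minimal zero forcing sets of $T_2$. The cleanest route is probably to observe that in $T_2$, every fort contains the set of all pendant leaves adjacent to the center is not quite right — rather, the key structural fact is that any two pendant leaves at the center form the "same" obstruction, so a minimal zero forcing set is forced to contain all but exactly one of them; after fixing that, the remaining freedom lives entirely inside the single long leg, which is a path, and minimal zero forcing behavior inside a path appended to an already-mostly-blue graph admits only a bounded number of minimal configurations. Making "minimal zero forcing behavior on the long leg" precise — in particular ruling out that a minimal set could scatter many vertices along the long leg — is where the real care is needed; I expect a short argument using the fact that a path $P_m$ has zero forcing number $1$ and that any superset of a zero forcing set fails minimality, so at most $O(m)$ positions can be the "seed" on the long leg and the rest of $S$ on that leg is then determined to be empty by minimality. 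Assembling these pieces gives the two trees and completes the proof.
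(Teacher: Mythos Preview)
Your approach is essentially identical to the paper's: you construct the very same pair of spiders ($S_{5,\ldots,5}$ with $k=(n-1)/5$ legs versus $S_{1,\ldots,1,4k+1}$ with $k-1$ pendant legs and one long leg), invoke Proposition~\ref{prop_trees} for the exponential side, and argue the polynomial side by noting that a minimal zero forcing set must contain all but at most one pendant leaf, with the only remaining freedom being an $O(L)$ choice of a short ``seed'' on the long leg. The paper dispatches that last step in one sentence by simply listing the possible shapes of a minimal zero forcing set (all-but-one leaves, or $k-2$ pendant leaves together with two adjacent interior vertices of the long leg), so the case analysis you are anticipating is short; your slight worry that a minimal set might ``scatter many vertices along the long leg'' is unfounded, since any two adjacent long-leg vertices (or the single vertex $a_1$ adjacent to the center, or the long-leg leaf) already suffice and hence preclude further long-leg vertices by minimality.
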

\proof
Let $S_{1,1,\ldots,1,(4n+1)/5}$ be a spider with $\frac{n-6}{5}$ legs of length 1 and one leg of length $\frac{4n+1}{5}$. This spider has a total of $\frac{n-1}{5}$ legs, and therefore $\frac{n-1}{5}$ leaves. The minimal zero forcing sets of this graph consist of either all-but-one leaves, or of all-but-one of the leaves in the legs of length 1 plus two adjacent non-leaf, non-center vertices in the leg of length $\frac{4n+1}{5}$. Thus, $S_{1,1,\ldots,1,(4n+1)/5}$ has polynomially many minimal zero forcing sets. On the other hand, in Proposition \ref{prop_trees}, it was shown that $S_{5,5,\ldots,5}$ has exponentially many minimal zero forcing sets, yet it has the same number of vertices, leaves, and branchpoints as 
$S_{1,1,\ldots,1,(4n+1)/5}$. 
\qed

\vspace{9pt}
\noindent We next show a direct relation between the number of connected components and the number of minimal zero forcing sets.
\begin{prop}
Let $G_1,\ldots,G_k$ be the connected components of a graph $G$. For $1\leq i\leq k$, let $n_i$ be the number of minimal zero forcing sets of $G_i$. Then, the number of minimal zero forcing sets of $G$ is $\prod_{i=1}^kn_i$.
\end{prop}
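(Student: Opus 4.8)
The plan is to exhibit an explicit bijection between the minimal zero forcing sets of $G$ and the $k$-tuples $(S_1,\ldots,S_k)$ in which each $S_i$ is a minimal zero forcing set of $G_i$; the claimed count $\prod_{i=1}^{k}n_i$ is then immediate.

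First I would record the basic fact that the zero forcing process respects connected components. Since $G$ has no edge joining two distinct components, a blue vertex of $G_i$ never has a neighbor outside $G_i$, so a force performed in $G$ at a vertex of $G_i$ is exactly a force performed in $G_i$. Hence for any $S\subseteq V(G)$ we have $\operatorname{cl}(S)=\bigcup_{i=1}^{k}\operatorname{cl}_{G_i}(S\cap V(G_i))$, where $\operatorname{cl}_{G_i}$ denotes the closure taken inside $G_i$. In particular, $S$ is a zero forcing set of $G$ if and only if $S\cap V(G_i)$ is a zero forcing set of $G_i$ for every $i$. Note also that since each $V(G_i)$ is nonempty, any zero forcing set of $G$ must meet every $V(G_i)$, so the pieces $S\cap V(G_i)$ are genuinely (nonempty) zero forcing sets of the respective $G_i$.

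Next I would handle minimality. Write $S_i=S\cap V(G_i)$; because $V(G_1),\ldots,V(G_k)$ partition $V(G)$, the sets $S_1,\ldots,S_k$ partition $S$. If $S$ is a zero forcing set of $G$ and $v\in S_j$, then $S\setminus\{v\}$ differs from $S$ only in its $j$-th piece, so by the previous paragraph $S\setminus\{v\}$ is a zero forcing set of $G$ if and only if $S_j\setminus\{v\}$ is a zero forcing set of $G_j$. It follows that $S$ is a minimal zero forcing set of $G$ if and only if, for each $i$, no vertex can be removed from $S_i$ while keeping it a zero forcing set of $G_i$, i.e., if and only if each $S_i$ is a minimal zero forcing set of $G_i$.

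Finally I would conclude by verifying that the map $S\mapsto (S\cap V(G_1),\ldots,S\cap V(G_k))$ and the map $(S_1,\ldots,S_k)\mapsto\bigcup_{i=1}^{k}S_i$ are mutually inverse (again because the $V(G_i)$ partition $V(G)$), and that by the minimality equivalence above the first carries minimal zero forcing sets of $G$ into tuples of minimal zero forcing sets of the $G_i$, while the second carries such tuples back to minimal zero forcing sets of $G$. Thus the two families are in bijection, and the number of minimal zero forcing sets of $G$ equals $\prod_{i=1}^{k}n_i$. I do not expect a genuine obstacle here; the only point requiring care is the clean statement and proof that the closure distributes over connected components, after which the rest is bookkeeping with the partition of $V(G)$.
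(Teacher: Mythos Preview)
Your proposal is correct and follows essentially the same approach as the paper: both argue that $S$ is a zero forcing set of $G$ iff each $S\cap V(G_i)$ is a zero forcing set of $G_i$, that $S$ is minimal iff each $S\cap V(G_i)$ is minimal, and then count via the resulting bijection. Your version is simply more explicit about the closure decomposition and the inverse maps, which the paper leaves implicit.
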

\proof
A set $S$ is a zero forcing set of $G$ if and only if $S\cap V(G_i)$ is a zero forcing set of $G_i$ for each $1\leq i\leq k$. Moreover, $S$ is minimal if and only if $S\cap V(G_i)$ is minimal for each $1\leq i\leq k$. Thus, each minimal zero forcing set $S$ of $G$ corresponds to a collection of minimal zero forcing sets of $G_1,\ldots,G_k$. Furthermore, since there are $\prod_{i=1}^kn_i$ distinct ways to select minimal zero forcing sets of $G_1,\ldots,G_k$, it follows there are $\prod_{i=1}^kn_i$ distinct minimal zero forcing sets in $G$.
\qed
\begin{cor}
\label{corcomp}
If a graph has $k$ nontrivial components, then it has $\Omega(2^k)$ minimal (and minimum) zero forcing sets. 
\end{cor}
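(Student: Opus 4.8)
The plan is to reduce the statement to a per-component fact and feed it into the preceding proposition. Concretely, I will show that every nontrivial connected graph has at least two distinct minimal zero forcing sets --- in fact at least two distinct \emph{minimum} zero forcing sets. Granting this, let $G_1,\dots,G_k$ (after relabeling) be the nontrivial components of $G$; the trivial components each have exactly one minimal zero forcing set (the single vertex, since $\emptyset$ is not a zero forcing set of $K_1$), so by the preceding proposition the number of minimal zero forcing sets of $G$ equals the product of the numbers of minimal zero forcing sets of the nontrivial components, which is at least $2^k$. For the ``minimum'' count I will use the analogous multiplicativity statement --- a set is a minimum zero forcing set of $G$ if and only if its restriction to each component is a minimum zero forcing set of that component --- which follows from the additivity $\Z(G)=\sum_i \Z(G_i)$ together with the fact that a zero forcing set restricted to a component is a zero forcing set of that component; this again yields a product that is $\Omega(2^k)$.

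It remains to prove the per-component claim: if $H$ is connected with $|V(H)|\ge 2$, then $H$ has at least two minimum zero forcing sets. First I note $\Z(H)\le |V(H)|-1$, since for any $v\in V(H)$ a neighbor of $v$ (which exists, as $H$ is connected and nontrivial) is blue with $v$ as its unique white neighbor once $V(H)\setminus\{v\}$ is colored, so $V(H)\setminus\{v\}$ is a zero forcing set. Hence $m:=|V(H)|-\Z(H)\ge 1$, and any chronological list of forces of a minimum zero forcing set $S$ of $H$ consists of $m\ge 1$ forces $u_1\to w_1,\dots,u_m\to w_m$ with the forcing vertices $u_1,\dots,u_m$ pairwise distinct. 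Let $R$ be the corresponding reversal, i.e.\ the terminus $V(H)\setminus\{u_1,\dots,u_m\}$; by the remark in the preliminaries, $R$ is a zero forcing set of size $|S|=\Z(H)$, hence a minimum zero forcing set. Finally $R\ne S$, because the first force must be performed by a vertex that is blue from the start, so $u_1\in S$, while $u_1\notin R$. Thus $H$ has at least two minimum zero forcing sets, and since minimum zero forcing sets are minimal, at least two minimal ones as well.

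The only real care needed is in the bookkeeping for $\Omega(2^k)$ --- that trivial components contribute a harmless factor of $1$ and that the multiplicativity of the count carries over from minimal to minimum zero forcing sets --- both of which are routine. The substantive content, namely that each nontrivial component contributes a factor of at least $2$, is exactly the reversal argument above, and I expect that to be the main (though not difficult) step.
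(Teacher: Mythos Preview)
Your proof is correct and follows essentially the same approach as the paper: both use reversals to produce a second minimum zero forcing set in each nontrivial component and then combine across components to obtain $\Omega(2^k)$ sets. You are in fact more careful than the paper, explicitly verifying that the reversal differs from the original (via $u_1\in S$, $u_1\notin R$) and handling trivial components and the minimum-vs-minimal multiplicativity; the paper leaves these points implicit.
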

\proof
Let $G_1,\ldots,G_k$ be the nontrivial connected components of $G$. For $1\leq i\leq k$, let $S_i^1$ be a minimum zero forcing set of $G_i$ and $S_i^2$ be a reversal of $S_i^1$. Then, for each $I\subset \{1,\ldots,k\}$, the set $S(I)=\bigcup_{i\in I} S_i^1 \cup \bigcup_{i\notin I} S_i^2$ is a minimum (and hence minimal) zero forcing set of $G$. Thus, $G$ has $\Omega(2^k)$ minimal zero forcing sets.  
\qed

\vspace{9pt}

\noindent We conclude this section by investigating whether vertex transitivity can affect the number of minimal zero forcing sets. By Proposition \ref{corcomp}, it is easy to see that a disconnected vertex transitive graph can have both a polynomial and exponential number of minimal zero forcing sets. For example, any disjoint union of vertex transitive graphs of fixed size, like $\dot\bigcup_{i=1}^{n/3}C_3$, has exponentially many zero forcing sets. However, $C_{n/2}\dot\cup C_{n/2}$ has polynomially many minimal zero forcing sets. Next, we show that this also holds for connected vertex transitive graphs.

\begin{prop}
\label{propproduct}
Some connected vertex transitive graphs have exponentially many minimal zero forcing sets; some have polynomially many minimal zero forcing sets.
\end{prop}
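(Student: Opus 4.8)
The plan is to prove the two halves with explicit families.

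\emph{Polynomial side.} I would take the cycle $C_n$, which is connected and vertex transitive, and argue that its minimal zero forcing sets are exactly its $n$ edges. First, no independent set of $C_n$ is a zero forcing set: in a cycle a blue vertex with no blue neighbour has two white neighbours, so no force is ever available, and thus every zero forcing set of $C_n$ must contain two adjacent vertices. Conversely, a pair of adjacent vertices \emph{is} a zero forcing set, since one of the two has a unique white neighbour and the forcing then sweeps around the cycle. Hence the minimal zero forcing sets of $C_n$ are precisely its $n$ edges, polynomially many.

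\emph{Exponential side.} I would take the hypercube $Q_d=K_2^{\,\square d}$ (so $n=2^d$), which is connected, vertex transitive, and a Cartesian product. Write $Q_d=Q_{d-1}\,\square\,K_2$ with bottom copy $A$ and top copy $B$ joined by the matching $\{(u,0)(u,1):u\in V(Q_{d-1})\}$, and single out one further coordinate inside $Q_{d-1}$, so a vertex of $Q_d$ is written $((\delta,v),\varepsilon)$ with $\delta,\varepsilon\in\{0,1\}$ and $v\in\{0,1\}^{d-2}$. For each $T\subseteq\{0,1\}^{d-2}$ set
\[
S_T=\bigl(A\setminus\{((0,t),0):t\in T\}\bigr)\cup\{((1,t),1):t\in T\},
\]
a set of exactly $2^{d-1}$ vertices. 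I would check $S_T$ is a zero forcing set: for each $t\in T$ the vertex $((1,t),0)$ lies in $S_T$ and its only white neighbour is $((0,t),0)$ — its other bottom neighbours have $\delta=1$ and so were not deleted, and $((1,t),1)$ was added — hence it forces $((0,t),0)$; after all these forces $A$ is entirely blue, and then each $(u,0)$ forces $(u,1)$, filling $B$. Since $\Z(Q_d)=2^{d-1}$, each $S_T$ is a minimum, hence minimal, zero forcing set, and $S_T\cap B=\{((1,t),1):t\in T\}$ recovers $T$, so distinct $T$ give distinct $S_T$. As $T$ ranges over all $2^{\,2^{d-2}}=2^{n/4}$ subsets of $\{0,1\}^{d-2}$, this exhibits exponentially many minimal zero forcing sets of $Q_d$.

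The only genuinely non-routine step is finding this family: the idea is to start from the ``bottom face'' $A$, a minimum zero forcing set in which each $(u,0)$ forces $(u,1)$, and to perturb it by replacing, for each chosen index $t\in T$, the bottom vertex $((0,t),0)$ by the top vertex $((1,t),1)$; the crucial point is that the surviving bottom neighbour $((1,t),0)$ of the deleted vertex still sees exactly one white vertex, so the perturbation never stalls the forcing process, no matter how the deleted indices are chosen. Everything else — the sweep around $C_n$, the two‑phase closure of $S_T$, the distinctness of the $S_T$, and their minimality (which uses only the standard value $\Z(Q_d)=2^{d-1}$, or can be obtained by a direct one‑vertex deletion check) — is routine bookkeeping.
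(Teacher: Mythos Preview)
Your argument is correct. The polynomial side matches the paper exactly: both use $C_n$ and the observation that its minimal zero forcing sets are precisely its $n$ edges.

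The exponential side is genuinely different. The paper uses $G=C_5\,\square\,K_{n/5}$ and, for each $I\subseteq\{1,\dots,n/5\}$, takes all of the middle clique $K^2$ together with the $I$-indexed vertices of one neighbouring clique and the complementary vertices of the other; this yields $2^{n/5}$ minimum (hence minimal) zero forcing sets, relying on $\Z(C_5\,\square\,K_{n/5})=2n/5$. Your hypercube construction instead starts from the bottom face $A$ of $Q_d=Q_{d-1}\,\square\,K_2$ and swaps, for each $t\in T$, the bottom vertex $((0,t),0)$ for the top vertex $((1,t),1)$; the key check that $((1,t),0)$ retains a unique white neighbour is clean, and the count $2^{n/4}$ follows. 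Both proofs hinge on knowing the zero forcing number of the chosen Cartesian product and on exhibiting an independent binary choice at $\Theta(n)$ coordinates, so the mechanics are parallel; the difference is in the witness family. Notably, your family is \emph{sparse} ($Q_d$ is $(\log_2 n)$-regular), whereas the paper's family is dense, and immediately after this proposition the paper explicitly leaves open whether a sparse connected vertex transitive family with exponentially many minimal zero forcing sets exists. Your construction appears to answer that question.
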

\proof
Let $G=C_5 \square K_{n/5}$. Note that $G$ is a connected vertex transitive graph with $\Z(G)=2n/5$. Let $K^1$, $K^2$, and $K^3$ be three distinct maximal cliques of $G$ with $V(K^1)=\{u_1,\ldots,u_{n/5}\}$, $V(K^2)=\{v_1,\ldots,v_{n/5}\}$, and $V(K^3)=\{w_1,\ldots,w_{n/5}\}$ such that $v_i\sim u_i$ and $v_i \sim w_i$ for all $i\in \{1,\ldots,n/5\}$. 

For each $I\subset \{1,\ldots,n/5\}$, let $K^1(I)=\{u_i:i\in I\}$ and $K^3(I)=\{w_i:i\in \{1,\ldots,n/5\}\backslash I\}$. Then, $S(I):=V(K^2)\cup K^1(I)\cup K^3(I)$ is a zero forcing set of $G$, since for each $i\in I$, $v_i$ can force $w_i$ and for each $i\in \{1,\ldots,n/5\}\backslash I$, $v_i$ can force $u_i$. After every vertex in $K^1$, $K^2$, and $K^3$ is colored blue, the rest of the graph can also be forced. See Figure \ref{Fig_exp_zf_set} for an illustration. Since $\Z(G)=2n/5$ and $|S(I)|=2n/5$ for each $I\subset \{1,\ldots,n/5\}$, $S(I)$ is a minimum (and hence also minimal) zero forcing set. There are $2^{n/5}$ subsets $I$ of $\{1,\ldots,n/5\}$, and each of them creates a distinct minimum zero forcing set $S(I)$; thus, there are $\Omega(2^{n/5})$ distinct minimum zero forcing sets of $G$.

\begin{figure}[h]
\centering
\scalebox{1}{\includegraphics{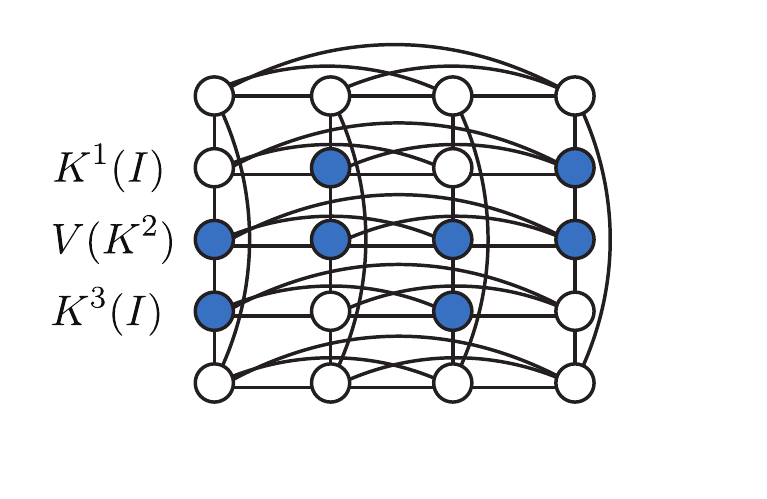}}
\caption{A connected vertex transitive graph with an exponential number of minimal zero forcing sets.}
\label{Fig_exp_zf_set}
\end{figure}

\noindent Conversely, the cycle $C_n$ is a connected vertex transitive graph with $n$ minimal zero forcing sets, since any set containing two adjacent vertices is a zero forcing set, and any set consisting of more than two adjacent vertices is not minimal.
\qed

\vspace{9pt}

\noindent While vertex transitive graphs with a polynomial number of minimal zero forcing sets can be both sparse and dense (e.g., cycles and complete graphs), and vertex transitive graphs with an exponential number of minimal zero forcing sets can be dense (e.g., the family shown in Proposition \ref{propproduct}), we have not found a sparse family of vertex transitive graphs with an exponential number of minimal zero forcing sets. We leave this as an open question.

%
%
%
%


\section{Maximum minimal zero forcing sets}
\label{sec:max_min}
In addition to exploring the number of minimal zero forcing sets in a graph, it is also interesting to consider the possible sizes of minimal zero forcing sets in the graph. In this section, we explore $\zbar(G)$, the maximum size of a minimal zero forcing set of $G$. We begin by characterizing the extremal values of $\zbar(G)$. It is easy to see that for a graph $G$ of order $n$, $\zbar(G) \leq n$, with equality holding if and only if $G$ is the empty graph $\overline{K_n}$. Thus, the first nontrivial extremal value to consider is $\zbar(G) = n - 1$.

\begin{prop}
Let $G$ be a graph on $n$ vertices. Then $\zbar(G)=n-1$ if and only if $G\cong K_m \cup kK_1$ where $k\geq 2$ is an integer and $m = n - k$.
\label{zbarnminus1}
\end{prop}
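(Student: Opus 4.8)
The plan is to prove both directions by analyzing what it means for a minimal zero forcing set to have size exactly $n-1$. The key observation is that if $S$ is a minimal zero forcing set with $|S| = n-1$, then there is exactly one white vertex $v$, and some vertex in $S$ must force $v$; minimality of $S$ then forces strong structural constraints on $G$. In the reverse direction I would simply exhibit, for $G \cong K_m \cup kK_1$, a minimal zero forcing set of size $n-1$ and check that no larger one exists (equivalently that $G \ncong \overline{K_n}$, which holds since $m \geq 1$... wait, actually when $k \geq 2$ and $m = n-k$ we could have $m=0$; I should handle $m \le 1$ separately or note that $K_0 \cup kK_1 = \overline{K_k}$ is excluded — so implicitly $m \geq 2$ here, and I would state this carefully).

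For the forward direction, suppose $\zbar(G) = n-1$ and let $S = V \setminus \{v\}$ be a minimal zero forcing set. First I would argue that $G$ has at least two trivial components, or more precisely isolate the structure as follows. Since $S$ is a zero forcing set of size $n-1$ and the only white vertex is $v$, some neighbor $w$ of $v$ must force $v$ at the first (and only) timestep, so $v$ has a neighbor; but also, for $S$ to even be minimal, removing any $u \in S$ must destroy the zero forcing property. I would consider a vertex $u \in S$ of positive degree and show that if $u$ has a neighbor other than $v$, or if $v$ has degree $\geq 2$, then $S \setminus \{u\}$ is still zero forcing (because the closure can still reach $u$ and then $v$), contradicting minimality — this pins down that every vertex adjacent to $v$ has degree $1$, i.e., $v$'s neighbors are leaves attached only to $v$, and in fact the whole nontrivial part collapses. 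I then separately use minimality to rule out $G$ being connected or having only one nontrivial component of the wrong type, concluding the nontrivial component must be complete and there must be at least two isolated vertices (one of which plays the role of $v$ — note $v$ itself ends up isolated only in a degenerate reading, so I'd be careful: $v$ is forced, so $v$ is not isolated; rather the structure is that $K_m$ together with $k \ge 2$ isolated vertices where $\zbar$ is realized by leaving one isolated vertex white). I would double-check the bookkeeping here against the example $K_{n/2} \circ K_1$ and against small cases like $K_2 \cup 2K_1$.

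The main obstacle I anticipate is the forward direction's case analysis: correctly showing that minimality of a size-$(n-1)$ zero forcing set forbids any "redundancy," which requires ruling out several configurations (a nontrivial component that is not complete, fewer than two isolated vertices, multiple nontrivial components). A clean way to organize this: observe that $\overline{K_n}$ has $\zbar = n$ and is the unique such graph (already noted in the excerpt), so for $\zbar(G) = n-1$ we need $G$ to have at least one edge; then show (i) $G$ has at least two components that are each a single vertex or show the number of edges is very restricted, (ii) any zero forcing set omitting only one vertex, if minimal, means every other vertex is "essential," forcing each nontrivial component to be a clique — because in a clique $K_m$, $V(K_m) \setminus \{x\}$ is zero forcing and minimal, whereas in any non-complete component there is slack. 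I would lean on the fort characterization from \cite{brimkov_zf1} as an alternative tool if the direct closure argument gets unwieldy: forts of $K_m$ are exactly the $2$-element subsets, and isolated vertices are themselves forts, which makes the "every zero forcing set meets every fort" condition transparent and gives a short proof that $V \setminus \{v\}$ for $v$ an isolated vertex is a minimal zero forcing set precisely when there are $\geq 2$ isolated vertices and all other components are complete.
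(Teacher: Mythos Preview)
Your forward-direction argument rests on a claim that is false: you assert that if $u \in S$ has a neighbor other than $v$ (or if $\deg(v) \geq 2$), then $S \setminus \{u\}$ is still a zero forcing set. Take $G = K_n$ and $S = V \setminus \{v\}$: every $u \in S$ has $n-2$ neighbors other than $v$, yet $S \setminus \{u\}$ has only $n-2$ vertices and is \emph{not} a zero forcing set of $K_n$. So the deduction ``every vertex adjacent to $v$ has degree $1$'' is wrong, and the structure you describe (``$v$'s neighbors are leaves attached only to $v$'') is not what actually happens. The correct obstruction is that $S \setminus \{u\}$ fails to be zero forcing precisely when $\{u,v\}$ is a fort, i.e.\ when $u$ and $v$ have the same neighbors in $V \setminus \{u,v\}$; requiring this for \emph{every} non-isolated $u$ is what forces the nontrivial component to be a clique. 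Your reverse direction is also confused: you write that ``$\zbar$ is realized by leaving one isolated vertex white,'' but an isolated vertex can never be forced, so such a set is not even zero forcing. In $K_m \cup kK_1$ the vertex you omit must lie in $K_m$.

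The paper's proof proceeds quite differently: it assumes the single nontrivial component $C$ is not a clique, picks non-adjacent $u,v \in C$, and then for any purported minimal zero forcing set $S$ of size $n-1$ it does a short case analysis on which vertex of $C$ is omitted (one of $u,v$, or some third vertex $w$, splitting further on whether $w$ dominates $C$), in each case exhibiting a specific vertex that can be dropped from $S$ while keeping it zero forcing. You would need either to repair your direct structural approach along the ``$\{u,v\}$ is a fort'' line sketched above, or to adopt a case analysis closer to the paper's.
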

\begin{proof}

Let $G$ be a graph with $\zbar(G)=n-1$ and suppose $G\not\cong K_m \cup kK_1$ where $k\geq 2$ is an integer and $m = n - k$.
Suppose first that $G$ has multiple nontrivial components. In this case, a minimal zero forcing set of $G$ cannot contain all vertices from some nontrivial component of $G$. Thus, each minimal zero forcing set contains at most $n-2$ vertices, so $\zbar(G)\leq n-2$, a contradiction.
Now, suppose $G$ has a single nontrivial component $C$. By the assumption that $G\not\cong K_m \cup kK_1$, it follows that $C$ is not a clique. This means that there must be two non-adjacent vertices $u$ and $v$ in $C$. 
Let $S$ be a minimal zero forcing set of $G$ of size $n-1$. Then, $S$ must contain all isolated vertices of $G$, and therefore it contains all-but-one vertices of $C$. 

Suppose one of $u$ and $v$, say $u$, is not in $S$. Let $w$ be a neighbor of $v$. Then, $S\setminus \{w\}$ is also a zero forcing set; this contradicts the minimality of $S$. Thus, both $u$ and $v$ have to be in $S$ which means there is some other vertex $w \notin \{u,v\}$ that is not in $S$. If $w$ is not a dominating vertex of $C$, then there is a vertex $q$ not adjacent to $w$. Let $p$ be a neighbor of $q$. Then, $S\setminus \{p\}$ is also a zero forcing set of $G$. If $w$ is a dominating vertex, then $S\setminus \{v\}$ is also a zero forcing set of $G$, since $u$ can force $w$, and then $w$ can force $v$. Thus, if $\zbar(G)=n-1$, $G$ must be isomorphic to $K_m \cup kK_1$. Conversely, if $G\cong K_m \cup kK_1$, it is easy to see that $\zbar(G)=n-1$.
\end{proof}




\noindent Next, we consider low values of $\zbar(G)$. Since for any graph $G$, $\zbar(G) \geq \Z(G)$, we turn our attention to characterizing $\zbar(G) = \Z(G)$.
\begin{prop} \label{prop:zbarEqualsZ}
Let $G$ be a graph. Then, $\zbar(G)=Z(G)$ if and only if every zero forcing set of $G$ contains a minimum zero forcing set. 
\end{prop}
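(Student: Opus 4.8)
The plan is to prove both implications directly from the definitions. The statement says $\zbar(G) = \Z(G)$ if and only if every zero forcing set contains a minimum zero forcing set, so I would handle the two directions separately.

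For the forward direction, I would argue by contrapositive. Suppose some zero forcing set $S$ of $G$ does not contain any minimum zero forcing set. Since $S$ is a zero forcing set, it contains a minimal zero forcing set $S' \subseteq S$ (just delete vertices one at a time while the zero forcing property persists). This $S'$ cannot be a minimum zero forcing set, since $S'$ is contained in $S$ and $S$ contains no minimum zero forcing set. Hence $|S'| > \Z(G)$, which gives $\zbar(G) \geq |S'| > \Z(G)$, so $\zbar(G) \neq \Z(G)$ (using the standing inequality $\zbar(G) \geq \Z(G)$). This establishes the contrapositive of the forward implication.

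For the reverse direction, assume every zero forcing set of $G$ contains a minimum zero forcing set. Let $S$ be any minimal zero forcing set; I want to show $|S| = \Z(G)$. By hypothesis, $S$ contains a minimum zero forcing set $T \subseteq S$ with $|T| = \Z(G)$. But $T$ is itself a zero forcing set contained in $S$, and $S$ is minimal with respect to inclusion, so no proper subset of $S$ is a zero forcing set; therefore $T = S$ and $|S| = \Z(G)$. Since this holds for every minimal zero forcing set, $\zbar(G) = \Z(G)$.

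I do not expect a serious obstacle here; the proof is essentially a manipulation of the definitions of \emph{minimal} and \emph{minimum}. The one point requiring a little care is making explicit that every zero forcing set contains some minimal zero forcing set (obtained by greedily removing vertices), which is what lets us pass from an arbitrary zero forcing set to a minimal one in the forward direction. Beyond that, the argument is short and self-contained.
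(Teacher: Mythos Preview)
Your proof is correct and follows essentially the same approach as the paper: both directions use the fact that any zero forcing set contains a minimal one, and that a minimal set containing a minimum set must equal it. The only cosmetic difference is that the paper phrases the forward direction as a proof by contradiction (assuming $\zbar(G)=\Z(G)$ and deriving a contradiction) whereas you phrase it as a contrapositive, but the underlying argument is identical.
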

\begin{proof}
Suppose $\zbar(G)=\Z(G)$ and suppose there exists a zero forcing set $S$ of $G$ that does not contain a minimum zero forcing set. Let $S'$ be a minimal zero forcing set contained in $S$. Then, $\zbar(G)\geq |S'|>\Z(G)$, a contradiction. Conversely, if every zero forcing set contains a minimum zero forcing set, then every minimal zero forcing set must also contain a minimum zero forcing set and must therefore be a minimum zero forcing set.  
\end{proof}
\noindent The condition of Proposition \ref{prop:zbarEqualsZ} is easy to verify for some families of graphs, especially those with high symmetry. A few such families are given in the following corollary.
\begin{cor}
\label{cor:ZequalsZbar}
If $G$ is a cycle, empty graph, star, wheel, or complete graph, then $\zbar(G)=\Z(G)$.
\end{cor}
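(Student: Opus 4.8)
The plan is to verify the condition of Proposition~\ref{prop:zbarEqualsZ} for each of the five families: show that every zero forcing set contains a minimum zero forcing set. For $\overline{K_n}$ this is trivial, since the only zero forcing set is $V$ itself, which is minimum. For $K_n$, recall $\Z(K_n)=n-1$; any zero forcing set $S$ of $K_n$ must have $|S|\geq n-1$, and any $(n-1)$-subset of $V(K_n)$ is a minimum zero forcing set, so $S$ contains one (either $S$ has exactly $n-1$ vertices, or $S=V$ and we drop any one vertex). For the star $S_n=\overline{K}_{n-1}\vee K_1$, $\Z(S_n)=n-2$, and a set is zero forcing if and only if it contains at least $n-2$ of the $n-1$ leaves; any such set contains a set of exactly $n-2$ leaves, which is minimum.

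For the cycle $C_n$, we use the standard fact $\Z(C_n)=2$, and that a set $S$ is zero forcing if and only if it contains two consecutive vertices (this is essentially the argument already given in Proposition~\ref{propproduct} for the minimality count). Thus any zero forcing set $S$ contains a pair of adjacent vertices, which is a minimum zero forcing set, and we are done. For the wheel $W_n=C_{n-1}\vee K_1$, one has $\Z(W_n)=3$ for $n\geq 5$ (with small cases $W_4=K_4$, $W_5$ handled separately or subsumed by the complete-graph case); the key step is to argue that any zero forcing set $S$ of $W_n$ contains three vertices forming a minimum zero forcing set. The hub $h$ has $n-2$ white neighbors once only $h$ and at most one rim vertex are blue, so $h$ never forces until all but one rim vertex is blue; hence the forcing must be driven along the rim cycle $C_{n-1}$ by rim vertices. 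A rim vertex $v$ has two rim neighbors plus $h$; for $v$ to force, two of these three must already be blue. Analyzing which triples can initiate and sustain this process — e.g.\ the hub together with two consecutive rim vertices, or (for $W_5$) other symmetric configurations — shows every zero forcing set contains such a triple.

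The main obstacle is the wheel case: unlike the other families, the zero forcing sets of $W_n$ are not all obvious ``all-but-one'' type sets, so I would need to enumerate the minimum zero forcing sets of $W_n$ precisely and then check that every larger zero forcing set contains one of them. A clean way to do this is via forts (invoking the result from \cite{brimkov_zf1} that every zero forcing set meets every fort): identify the minimal forts of $W_n$ and show that a $3$-set hitting all of them is exactly one of the minimum zero forcing configurations, while any zero forcing set, hitting all forts, necessarily contains such a $3$-set. Small wheels ($W_4\cong K_4$, $W_5$) should be checked by hand and folded in via the complete-graph argument or directly, so that no edge case is missed.
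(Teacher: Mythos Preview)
Your approach is exactly what the paper intends: the paper offers no explicit proof of this corollary, merely remarking that the condition of Proposition~\ref{prop:zbarEqualsZ} is ``easy to verify for some families of graphs, especially those with high symmetry.'' Your treatments of $\overline{K_n}$, $K_n$, $S_n$, and $C_n$ are correct and complete.

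For the wheel you flag an obstacle and reach for the fort machinery, but the case yields to the same ``first force'' analysis you already use, so no enumeration of forts is needed. Let $S$ be a zero forcing set of $W_n$ ($n\ge 5$; $W_4\cong K_4$ is already handled) with hub $h$ and rim $C_{n-1}$. If $h\notin S$, the first vertex to force is some rim vertex $v_i$ whose only white neighbor is $h$, so $v_{i-1},v_{i+1}\in S$ and $\{v_{i-1},v_i,v_{i+1}\}\subseteq S$ is a minimum zero forcing set. If $h\in S$ and a rim vertex $v_i$ forces first, then since $h$ is already blue at least one rim neighbor of $v_i$ lies in $S$, giving $\{h,v_i,v_{i\pm 1}\}\subseteq S$. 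Finally, if $h\in S$ and $h$ itself forces first, then all but one rim vertex lies in $S$, so two consecutive rim vertices certainly do, and again $S\supseteq\{h,v_i,v_{i+1}\}$ for some $i$.
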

\noindent Below are two more families of graphs that satisfy $\zbar(G)=\Z(G)$.
\begin{prop}
\label{prop:ZequalsZbar}
For any integers $a,b\geq 3$, $\Z(K_a\vee \overline{K}_b)=\zbar(K_a\vee \overline{K}_b)$ and $\Z(K_a\vee C_b)=\zbar(K_a\vee C_b)$.
\end{prop}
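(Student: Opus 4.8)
The plan is to use the criterion of Proposition \ref{prop:zbarEqualsZ}: it suffices to show that in each of the two graphs, every zero forcing set contains a minimum zero forcing set. First I would pin down $\Z(G)$ for $G = K_a \vee \overline{K}_b$ and for $G = K_a \vee C_b$, together with a description of the minimum zero forcing sets; this is routine but essential. For $K_a \vee \overline{K}_b$ with $a,b\ge 3$, one checks that $\Z = a + b - 2$: the $b$ vertices of $\overline{K}_b$ are pairwise nonadjacent, so at most one of them can be white at any time when a force happens, forcing all but one into any zero forcing set, and similarly each vertex of $K_a$ has all of $V\setminus\{v\}$ as neighbors so all but one of them must be blue; conversely taking all but one vertex from each side is easily seen to force. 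For $K_a \vee C_b$ I expect $\Z = a$ (take all of $K_a$ minus one vertex, plus two adjacent vertices of $C_b$ — wait, this needs care; more plausibly $\Z = a+1$): one should compute it directly via the fort/independence structure, e.g.\ using that $K_a$ forces a lower bound of $a-1$ on any side and the cycle contributes forts of its own.

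Once the minimum zero forcing sets are understood, the core step is to take an arbitrary zero forcing set $S$ and exhibit a minimum zero forcing set inside it. Because every vertex of $K_a$ is universal within the graph, at most one vertex of $K_a$ can be white when any force is performed by a $K_a$-vertex, and more to the point, at most one vertex of $K_a$ can remain white at the end of the forcing process started from $S$ unless it is forced from the other side; a short case analysis on how many vertices of $K_a$ lie in $S$ and on the structure of $S \cap V(\overline{K}_b)$ (resp.\ $S \cap V(C_b)$) should show that $S$ always contains $a-1$ vertices of $K_a$, or else can be trimmed — but trimming is not allowed, so the argument must instead directly locate a minimum zero forcing set as a subset. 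The key observation I would lean on is that in $K_a \vee H$, a set $S$ is zero forcing iff removing it leaves a graph in which the forcing can be completed, and the "independence" of $\overline{K}_b$ (and the near-independence / small fort structure of $C_b$) forces $S$ to contain a large, canonically describable chunk. For $\overline{K}_b$: any zero forcing set contains at least $b-1$ of its vertices (since two white vertices of $\overline{K}_b$ can never be reduced), hence contains $a-1$ further vertices to be zero forcing, and any $(a-1)$-subset of $K_a$ together with $(b-1)$ vertices of $\overline{K}_b$ is a minimum zero forcing set; picking such a subset of $S$ finishes that case.

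The $K_a \vee C_b$ case is where the real work lies, and it is the step I expect to be the main obstacle. Here $C_b$ is connected and has forts of the form $\{$two vertices at distance $2\}$ (no — rather the standard cycle forts), so a zero forcing set need not contain almost all of $C_b$, and one must understand how blue vertices on the cycle interact with blue vertices of $K_a$ through the join edges. The plan is: show $\Z(K_a \vee C_b)$ equals some explicit value (I would guess $a+1$, from an $(a-1)$-subset of $K_a$ plus two suitably placed cycle vertices, after verifying the join edges don't let you do better), describe all minimum zero forcing sets, and then for an arbitrary zero forcing set $S$ argue by cases on $|S \cap V(K_a)|$. If $|S\cap V(K_a)| \ge a-1$, extract an $(a-1)$-subset and, using that the remaining $\ge 1$ cycle vertices plus the $K_a$-bulk already force everything, find the needed small cycle piece inside $S$. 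If $|S \cap V(K_a)| \le a-2$, then at least two vertices of $K_a$ start white; since each is universal, the only way they get forced is by cycle vertices, which requires enough of $C_b$ to be blue early — pushing $|S \cap V(C_b)|$ up to a point where a minimum zero forcing set again sits inside $S$. Carrying out this last case analysis cleanly, and getting the exact value of $\Z(K_a\vee C_b)$ right, is the crux; the $a,b\ge 3$ hypotheses will be used to rule out degenerate small cases where these counting arguments fail.
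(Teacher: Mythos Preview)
Your overall strategy---invoke Proposition~\ref{prop:zbarEqualsZ} and show every zero forcing set contains a minimum one---is exactly what the paper does, and your treatment of $K_a\vee\overline{K}_b$ matches the paper's argument essentially verbatim (pairs on each side are forts, so any zero forcing set contains $a-1$ vertices of $K_a$ and $b-1$ of $\overline{K}_b$, and any such choice is already minimum).

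For $K_a\vee C_b$, however, there is a concrete gap. Your guess $\Z(K_a\vee C_b)=a+1$ is wrong; the correct value is $a+2$. The point you are missing is that every pair of vertices of $K_a$ is a fort of the \emph{join}: any other vertex, whether in $K_a$ or in $C_b$, is adjacent to both. Hence every zero forcing set $S$ satisfies $|S\cap V(K_a)|\ge a-1$, and your case ``$|S\cap V(K_a)|\le a-2$'' simply cannot occur; the paragraph you devote to it is vacuous. With $a-1$ vertices of $K_a$ blue and only two adjacent cycle vertices blue, each blue cycle vertex still has two white neighbors (one on the cycle, one in $K_a$), and no $K_a$-vertex can force either, so $a+1$ vertices do not suffice.

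Once you have $|S\cap V(K_a)|\in\{a-1,a\}$, the paper's argument is short and is the one you should aim for. If $|S\cap V(K_a)|=a-1$, no $K_a$-vertex can perform the first force, so some cycle vertex $v$ forces first; that forces $v$ and both of its cycle neighbors into $S$, and those three consecutive cycle vertices together with the $a-1$ vertices of $K_a$ already form a minimum zero forcing set of size $a+2$. If $|S\cap V(K_a)|=a$, then (for $b\ge 4$) again the first force is by a cycle vertex, which now only needs one cycle neighbor in $S$; all of $K_a$ plus two adjacent cycle vertices is a minimum zero forcing set of size $a+2$ sitting inside $S$. The case $b=3$ is disposed of separately since $K_a\vee C_3$ is complete. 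So the ``real work'' you anticipate collapses to this two-case split once the fort observation on $K_a$ is in hand.
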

\begin{proof}
In $K_a \vee \overline{K}_b$, each pair of vertices in $K_a$ and each pair of vertices in $\overline{K}_b$ form a fort. Thus, any zero forcing set of $K_a \vee \overline{K}_b$ must contain at least $a-1$ vertices of $K_a$ and at least $b-1$ vertices of $\overline{K}_b$. Moreover, each set consisting of exactly $a-1$ vertices of $K_a$ and exactly $b-1$  vertices of $\overline{K}_b$ is a minimum zero forcing set. Thus, by Proposition \ref{prop:zbarEqualsZ}, $\Z(K_a\vee \overline{K}_b)=\zbar(K_a\vee \overline{K}_b)$. 

By \cite[Lemma 4.3]{cospectral}, $\Z(K_a\vee C_b)=\min\{a+\Z(C_b),b+\Z(K_a)\}=\min\{a+2,b+a-1\}=a+2$. Let $S$ be an arbitrary zero forcing set of $K_a\vee C_b$. Note that $S$ must contain at least $a-1$ vertices of $K_a$, since each pair of vertices in $K_a$ is a fort. Suppose $S$ contains exactly $a-1$ vertices of $K_a$. Then, the first force cannot be performed by a vertex of $K_a$, since each vertex of $K_a$ will have at least two white neighbors. In order for a vertex of $C_b$ to perform the first force, it and its two neighbors in $C_b$ must be in $S$. However, a set consisting of $a-1$ vertices of $K_a$ and 3 consecutive vertices of $C_b$ is a minimum zero forcing set, so $S$ contains a minimum zero forcing set. 

Now, suppose $S$ contains $a$ vertices of $K_a$. Then, unless $b=3$, the first force still cannot be performed by a vertex of $K_a$, since each vertex of $K_a$ will have at least  two white neighbors. In order for a vertex of $C_b$ to perform the first force, it and one of its neighbors in $C_b$ must be in $S$. However, a set consisting of $a$ vertices of $K_a$ and 2 consecutive vertices of $C_b$ is a minimum zero forcing set, so again $S$ contains a minimum zero forcing set. Finally, if $b =3$, then $K_a \vee C_b$ is a complete graph. In all cases, by Proposition \ref{prop:zbarEqualsZ}, $\Z(K_a\vee C_b)=\zbar(K_a\vee C_b)$.

\end{proof}
\noindent In families of graphs without high symmetry, it can be difficult to determine whether every zero forcing set contains a minimum zero forcing set. Therefore, despite the complete characterization of $\zbar(G) = \Z(G)$ in Proposition \ref{prop:zbarEqualsZ}, the structure of these graphs is still unclear.

To obtain more insight about graphs with $\zbar(G) = \Z(G)$, we can look for graph operations that preserve this equality. From Corollary \ref{cor:ZequalsZbar} and Proposition \ref{prop:ZequalsZbar}, it seems like the operation of adding a universal vertex is a good candidate for preserving $\zbar(G) = \Z(G)$. In particular, adding any number of universal vertices to cycles, wheels, empty graphs, stars, complete graphs, and graphs of the form $K_a\vee \overline{K_b}$ and $K_a\vee C_b$ always produces another graph satisfying $\zbar(G) = \Z(G)$. However, the following result shows that there are graphs where adding a universal vertex does not preserve the property $\zbar(G) = \Z(G)$.

\begin{thm}\label{thrm:addingUniversals}
There are infinitely many graphs $G$ such that $\zbar(G) = \Z(G)$ and \\ $\zbar(G \vee K_1) > \Z(G \vee K_1)$.
\end{thm}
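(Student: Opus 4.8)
The plan is to exhibit an explicit infinite family. I would start by searching among small graphs for a graph $G$ with $\zbar(G) = \Z(G)$ whose join with $K_1$ breaks the equality, and then embed that example in an infinite family. A natural place to look is graphs built from a complete graph plus some extra structure, since Proposition \ref{prop:ZequalsZbar} already shows $K_a \vee \overline{K}_b$ and $K_a\vee C_b$ have the property; the idea is to modify such a graph slightly so that the join with $K_1$ admits a zero forcing set that is minimal but not minimum. Concretely, I would try something like $G = K_a \vee H$ for a small fixed $H$ (e.g. a path $P_3$, or $\overline{K}_2$ together with a pendant structure), with $a$ ranging over all integers $\geq$ some bound — this automatically gives infinitely many graphs, and since adding the $K_a$ changes everything by a predictable constant (each pair of $K_a$-vertices is a fort, forcing $a-1$ of them into every zero forcing set), the computations of $\Z$ and $\zbar$ reduce to a finite verification on the "core."

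The key steps, in order, would be: (1) fix the candidate family $\{G_a\}_{a \ge a_0}$ and compute $\Z(G_a)$, using the fort argument on the $K_a$ part together with a direct analysis of how forcing proceeds through $H$ — likely invoking \cite[Lemma 4.3]{cospectral} or a similar join formula; (2) verify $\zbar(G_a) = \Z(G_a)$ by checking, via Proposition \ref{prop:zbarEqualsZ}, that every zero forcing set of $G_a$ contains a minimum one (this is the same style of case analysis as in Proposition \ref{prop:ZequalsZbar}, split according to how many $K_a$-vertices lie in the set); (3) compute $\Z(G_a \vee K_1)$, again via the join formula, noting that $G_a \vee K_1 = K_{a+1} \vee H$; and (4) exhibit one explicit zero forcing set $S$ of $G_a \vee K_1$ that is minimal (deleting any vertex destroys the forcing property) but strictly larger than $\Z(G_a \vee K_1)$ — the new universal vertex is what makes this possible, because it lets certain vertices of $H$ become "redundant for forcing but not removable," so that a non-minimum inclusion-minimal set appears. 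Step (4) is the crux and is where the choice of $H$ must be made carefully.

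I expect the main obstacle to be step (4): finding $H$ so that the extra universal vertex genuinely creates a minimal-but-not-minimum zero forcing set. The difficulty is that adding a universal vertex tends to make forcing \emph{easier} and small zero forcing sets more plentiful, which on its face should help rather than hurt the property $\zbar = \Z$; one needs the subtler phenomenon where some white vertex $w$ is forced "too early" along one chronological list but is indispensable along another, so that $S$ cannot shrink. A promising concrete attempt is to take $H$ containing an induced path of length $3$ or $4$ attached appropriately (mirroring the spider-leg phenomenon from Proposition \ref{prop_trees}, where $\{b_i,c_i\}$ and $\{c_i,d_i\}$ are interchangeable blue pairs): then in $G_a \vee K_1$ one can try a set that uses the "wrong" pair together with enough of $K_{a+1}$, and argue by the interchangeability that no single vertex can be deleted while some other vertex (forced only through that pair) cannot be reached. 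Once a working $H$ of bounded size is pinned down, verifying minimality and the strict inequality $|S| > \Z(G_a\vee K_1)$ is a finite check independent of $a$, so the infinitude comes for free.
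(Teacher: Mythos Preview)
Your proposed family has a built-in contradiction. You take $G_a = K_a \vee H$ with $H$ fixed and $a$ varying, and you explicitly note that $G_a \vee K_1 = K_{a+1}\vee H = G_{a+1}$. Step~(2) asks you to show $\zbar(G_a)=\Z(G_a)$ for all $a\ge a_0$, while step~(4) asks you to show $\zbar(G_a\vee K_1)>\Z(G_a\vee K_1)$, i.e.\ $\zbar(G_{a+1})>\Z(G_{a+1})$, for the same values of $a$. These two requirements are incompatible: the very graph you produce in step~(4) as a witness to strict inequality is the next member of the family, which step~(2) claims satisfies equality. Your remark that ``the computations \ldots\ reduce to a finite verification on the core'' and are ``independent of $a$'' makes this worse, not better, since a uniform argument will give the same verdict on $\zbar$ versus $\Z$ for every $a$. (In fact Theorem~\ref{thrm:deletingUniversals}, proved just after this one, shows that once $\zbar(K_a\vee H)>\Z(K_a\vee H)$ for some $a$, it remains strict for all larger $a$; so at best your chain gives a \emph{single} graph with the desired property, not infinitely many.)

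The paper avoids this trap by choosing a family $G_n$ that has \emph{no} universal vertex: a fixed $5$-vertex ``core'' with a pendant path of variable length attached so as to form a cycle back to the core. Then $G_n\vee K_1$ is not $G_{n+1}$ or any other member of the family, so there is no self-reference. The verification that $\zbar(G_n)=\Z(G_n)=3$ is the same kind of first-forcer case analysis you outline in step~(2); the point is simply that the parameter $n$ must grow the graph in a direction \emph{other than} stacking universal vertices. If you want to salvage your strategy, replace the $K_a$ factor by some variable-length gadget (a long path or cycle) that contributes no universal vertex, and keep $H$ as the fixed piece where the minimal-but-not-minimum set of $G\vee K_1$ lives.
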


\begin{proof}
For each integer $n \geq 7$, let $G_n$ be the graph on $n$ vertices illustrated in Figure~\ref{fig:addUniversalCounterex}. Note that $\Z(G_n) \geq \delta(G_n) \geq 2$. Since $\{1,2,3\}$ is a zero forcing set of $G_n$ and no subset $S \subset V(G_n)$ with $|S| = 2$ is a zero forcing set, $\Z(G_n) = 3$. Let $H = G_n \vee K_1$. Since $G$ has no isolated vertices, $\Z(H) = \Z(G) + 1 = 4$ by \cite[Lemma 4.3]{cospectral}. Note that $\{1, 3, 4, 5, 6\}$ is a minimal zero forcing set of $H$ which implies that $\zbar(H) \geq 5 > 4 = \Z(H)$.

\begin{figure}[H]
\centering
\scalebox{.6}{\includegraphics{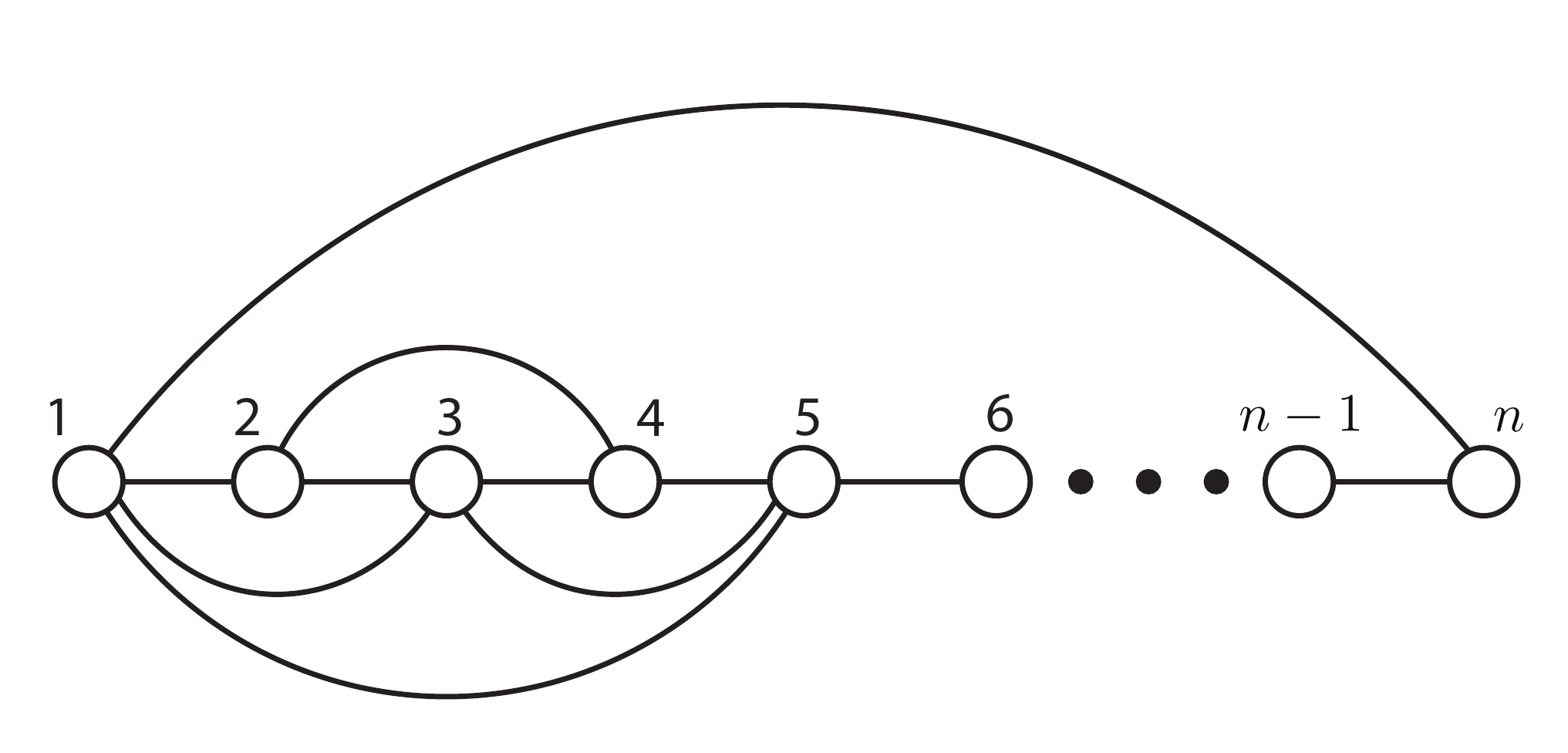}}
\caption{A graph $G_n$ on $n \geq 7$ vertices with $\zbar(G_n) = \Z(G_n)$ and $\zbar(G_n \vee K_1) > \Z(G_n \vee K_1)$.}\label{fig:addUniversalCounterex}
\end{figure}

It remains to show that $\zbar(G) =\Z(G) = 3$. Suppose that $B\subset V(G)$ is a minimal zero forcing set of G with $|B|\geq 4$. Let $v$ be the first vertex in $B$ to perform a force which means that $v$ and all-but-one of its neighbors are in $B$. If $v=1$, then $B$ contains one of $\{1,2,3,5\}$, $\{1,2,3,n\}$, $\{1,2,5,n\}$, and $\{1,3,5,n\}$ as a subset. These sets are zero forcing sets of $G$ that properly contain the following zero forcing sets respectively: $\{1,2,3\}$, $\{1,2,3\}$, $\{1,2,n\}$, $\{1,3,n\}$. 

Similarly, if $v=2$, then $B$ contains one of $\{1,2,3\}$, $\{1,2,4\}$, and $\{2,3,4\}$ as a subset (call this subset $X$). Since $|B| \geq 4$, $X$ is a proper subset of $B$; moreover, $X$ is a zero forcing set. If $v=3$, then $B$ contains one of $\{1,2,3,4\}$, $\{1,2,3,5\}$, $\{1,3,4,5\}$, and $\{2,3,4,5\}$ as a subset. Each of these sets are zero forcing sets of $G$ that properly contain another zero forcing set (namely, $\{1,2,3\}$ or $\{3,4,5\}$). Note that due to the symmetry of $G$, the cases where $v=4$ and $v=5$ are analogous to $v=2$ and $v=1$, respectively. 

Finally, if $v\in \{6,...,n\}$, then $B$ must contain a pair of vertices $\{x,y\}$ from the following list: $\{5,6\}, \{6,7\}, \dots, \{n-1,n\}, \{n,1\}$. Recall that $|B| \geq 4$. Since the vertices $1,5,6,7,\ldots,n$ are in the closure of $\{x,y\}$ and $B$ is a minimal zero forcing set, $B \setminus \{x,y\}$ contains at least two vertices from $\{2,3,4\}$. Thus, $B$ properly contains one of $\{x,y,2\}$, $\{x,y,3\}$, and $\{x,y,4\}$ which are each zero forcing sets of $G$. Therefore, in all cases, $B$ is not a minimal zero forcing set of $G$ which implies that $\zbar(G) \leq 3$. Since $\zbar(G) \geq \Z(G) = 3$, it follows that $\zbar(G) = 3$. \qedhere

\end{proof}

\noindent Although adding a universal vertex does not always preserve $\zbar(G) = \Z(G)$, the following result shows that deleting a universal vertex (if one exists) does in fact preserve $\zbar(G) = \Z(G)$.

\begin{thm}\label{thrm:deletingUniversals}
Let $G$ be a graph with a universal vertex $v$. If $\zbar(G) = \Z(G)$, then $\zbar(G-v) = \Z(G-v)$.
\end{thm}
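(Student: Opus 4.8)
The plan is to prove the contrapositive in spirit: assume $\zbar(G) = \Z(G)$ and show every zero forcing set of $G-v$ contains a minimum zero forcing set of $G-v$, which by Proposition~\ref{prop:zbarEqualsZ} gives $\zbar(G-v) = \Z(G-v)$. So let $S$ be an arbitrary zero forcing set of $G-v$; the goal is to locate a minimum zero forcing set of $G-v$ inside $S$. The natural first step is to relate zero forcing in $G-v$ to zero forcing in $G$. Since $v$ is universal, a standard fact (and the content of \cite[Lemma 4.3]{cospectral} in the relevant direction) is that $\Z(G) = \Z(G-v) + 1$ when $G-v$ has no isolated vertices, and more generally adding a universal vertex behaves predictably; I would first dispose of the degenerate cases (e.g. $G-v$ having isolated vertices, or $G-v$ empty) directly, since there $\zbar$ and $\Z$ are easy to compute.

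First I would establish the key bridge: if $T$ is a zero forcing set of $G-v$, then $T \cup \{v\}$ is a zero forcing set of $G$, and conversely, from a zero forcing set of $G$ one can extract information about $G-v$. More usefully, I would argue that if $T$ is a \emph{minimal} zero forcing set of $G-v$, then $T \cup \{v\}$ is a zero forcing set of $G$ and it contains a minimum zero forcing set of $G$ (using the hypothesis $\zbar(G)=\Z(G)$ via Proposition~\ref{prop:zbarEqualsZ}); that minimum zero forcing set $M$ of $G$ has size $\Z(G) = \Z(G-v)+1$ and is a subset of $T \cup \{v\}$. Then I would analyze whether $v \in M$. If $v \in M$, then $M \setminus \{v\} \subseteq T$ has size $\Z(G-v)$, and one checks it is a zero forcing set of $G-v$ (removing a universal vertex from a zero forcing set of $G$ that contains it — need to verify the forcing process can be carried out in $G-v$, which should work because $v$'s only role as a forcer requires all but one neighbor blue, i.e. nearly everything blue), hence a minimum one, and it lies in $T \subseteq S$... but wait, $T$ was taken minimal — I actually want this for an arbitrary zero forcing set $S$. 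So the cleaner route: take $S$ arbitrary, let $T \subseteq S$ be a minimal zero forcing set of $G-v$; it suffices to put a minimum zero forcing set of $G-v$ inside $T$, since $T \subseteq S$.

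So the crux reduces to: every minimal zero forcing set $T$ of $G-v$ contains a minimum zero forcing set of $G-v$. Applying the argument above, $M \subseteq T \cup \{v\}$ is a minimum zero forcing set of $G$. The main obstacle is the case $v \notin M$: then $M \subseteq T$ with $|M| = \Z(G-v) + 1 > \Z(G-v)$, so $M$ is a zero forcing set of $G-v$ (is it? a zero forcing set of $G$ not using $v$ — in $G$, vertices can force using $v$ as a neighbor-to-be-filled, so forces valid in $G$ need not be valid in $G-v$; I would need $M$ to still force in $G-v$, which requires care) that is too large, and I must shrink it inside $T$ to size $\Z(G-v)$. To handle this I would use the reversal/terminus machinery: if $M$ is a minimum zero forcing set of $G$ not containing $v$, consider a chronological list of forces in $G$; $v$ gets forced at some point by a vertex $w$ having $v$ as its unique white neighbor, meaning at that moment everything except $v$ is blue, so $M$ already forces all of $V(G)\setminus\{v\}$ using only vertices and edges of $G-v$ — hence $M$ is a zero forcing set of $G-v$. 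Then $M \subseteq T$ and $T$ minimal forces $T = M$, but $|M| = \Z(G-v)+1$ while minimal zero forcing sets of $G-v$ of size $\Z(G-v)$ exist — I'd derive a contradiction or rather conclude this case forces a smaller subset, by instead choosing $M$ to be a minimum zero forcing set of $G$ contained in $T\cup\{v\}$ that \emph{does} contain $v$: such an $M$ exists because from any minimum zero forcing set $M'$ of $G$ inside $T \cup \{v\}$ with $v \notin M'$, $M'$ is a zero forcing set of $G-v$ of size $\Z(G-v)+1$, so it properly contains a minimal (hence, since $\zbar(G-v)$... no, circular). I expect the genuinely delicate point is this circularity, and resolving it will require working in $G$ throughout: show that $T \cup \{v\}$, being a zero forcing set of $G$ of size $|T|+1$, contains a minimum zero forcing set $M$ of $G$; if $v \in M$ we are done as above; if $v \notin M$ then $M$ is a zero forcing set of $G$ of size $\Z(G-v)+1$ disjoint from $\{v\}$, and I claim $|T| = |M| - 1 = \Z(G-v)$ must then fail minimality unless $T$ itself has size $\Z(G-v)$, in which case $T$ is already minimum and we are trivially done. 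Carefully arranging these size comparisons — $|T| \le |M| = \Z(G)$ always, with the two outcomes $v\in M$ (extract $M\setminus\{v\}$) versus $v\notin M$ (then $|T|\ge|M|$ since $M\subseteq T$, forcing $|T|=\Z(G)=\Z(G-v)+1$, and then $T=M$ is a zero forcing set of $G-v$ of that size, and I must still find a size-$\Z(G-v)$ subset, which is where I'd invoke that $T=M$ reversed/reduced in $G-v$ yields something smaller contradicting $T=M$) — is the technical heart, and I'd expect to need the reversal argument to push $M$ down to the right size inside $T$.
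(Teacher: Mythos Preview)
Your framework is sound and, once patched, actually gives a cleaner argument than the paper's in the no-isolate case; but the case $v\notin M$ is not circular---you are simply missing one short observation that kills it outright.

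\textbf{The missing step.} Suppose $T$ is a minimal zero forcing set of $G-v$ and $G-v$ has no isolated vertices. Then every $x\in T$ has a neighbor in $G-v$ that lies outside $T$: if $N_{G-v}(x)\subseteq T$, pick any $u\in N_{G-v}(x)$ and note that in $T\setminus\{u\}$ the vertex $x$ has $u$ as its unique white neighbor, so $x\to u$ restores all of $T$ and forcing proceeds, contradicting minimality of $T$. Consequently, in $G$ every vertex of $T$ has at least two white neighbors (the universal $v$ and a $(G-v)$-neighbor outside $T$), so no first force is possible and $T$ is \emph{not} a zero forcing set of $G$. But in your case $v\notin M$ you correctly derive $T=M$, and $M$ \emph{is} a zero forcing set of $G$. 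Contradiction. Hence the case $v\notin M$ never occurs, and only your clean case $v\in M$ remains---no circularity, no reversal machinery needed.

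\textbf{Comparison with the paper.} The paper proves the contrapositive by taking a minimal zero forcing set $B'$ of $G-v$ with $|B'|>\Z(G-v)$ and showing $B'\cup\{v\}$ is minimal in $G$. That requires checking both that $B'$ alone is not a zero forcing set of $G$ (exactly the lemma above) \emph{and} that $(B'\setminus\{x\})\cup\{v\}$ is not a zero forcing set of $G$ for each $x\in B'$. Your route needs only the first of these, so once the gap is filled it is slightly more economical.

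\textbf{A real caution on isolated vertices.} You dismiss the isolated-vertex cases as degenerate, but they are not trivial: when $G-v$ has isolated vertices the relation between $\Z(G)$ and $\Z(G-v)$ changes (it is $\Z(G)=\Z(G-v)$ with one isolate and $\Z(G)=\Z(G-v)-1$ with two or more), and the argument that $M\setminus\{v\}$ forces in $G-v$ breaks when the last vertex $v$ would force is isolated in $G-v$. The paper handles these in three separate cases with genuine work; you should expect to do the same.
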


\begin{proof}

We proceed by proving the contrapositive: if $\zbar(G)\neq \Z(G)$, then $\zbar(G\vee K_1)\neq \Z(G\vee K_1)$. Suppose $G$ is a graph with $\zbar(G) \neq \Z(G)$ and let $H = G \vee K_1$ where $V(K_1) = \{v\}$. Since $\zbar(G) \neq \Z(G)$, by Proposition \ref{prop:zbarEqualsZ}, $G$ has a zero forcing set $B \subset V(G)$ that does not contain a minimum zero forcing set of $G$. Let $B' \subset B$ be a minimal zero forcing set of $G$, where $|B'| > \Z(G)$. Note that $B' \cup \{v\}$ is a zero forcing set of $H$. 

Assume that $G$ has no isolated vertices. Then, $\Z(H) = \Z(G) + 1$, which means that $|B' \cup \{v\}| = |B'| + 1 > \Z(G) + 1 = \Z(H)$. Since $B'$ is a minimal zero forcing set of $G$, deleting vertices in $B'$ from $B' \cup \{v\}$ does not create a zero forcing set of $H$. Also, since $G$ has no isolated vertices and $B'$ is a minimal zero forcing set of $G$, $B'$ must be a proper subset of $V(G)$ and every vertex in $B'$ must have a neighbor in $G$ that is not in $B'$. Therefore, $B'$ is not a zero forcing set of $H$. Thus, $B' \cup \{v\}$ is a minimal zero forcing set of $H$ and $\zbar(H) >  \Z(H)$.

Next, assume that $G$ has exactly one isolated vertex $u$. Then, $\Z(H) = \Z(G)$ and $B'$ contains $u$. Since $u \in B'$, $B'$ is a zero forcing set of $H$ because $u$ can force $v$ which allows $B'$ to force the remaining vertices in $V(G)$. Since $B'$ is a minimal zero forcing set of $G$, every vertex in $B' \setminus \{u\}$ has a neighbor in $G$ that is not in $B'$. Thus, deleting $u$ from $B'$ does not create a zero forcing set of $H$. Now let $x \in B' \setminus \{u\}$. To see that $B' \setminus \{x\}$ is not a zero forcing set of $H$, note that the component $C$ of $G$ that contains $x$ has no isolated vertices. So by the previous case, $(B' \cap V(C)) \cup \{v\}$ is not a zero forcing set of $H[V(C) \cup \{v\}]$. This means that the vertices in $(V(C) \setminus B') \cup \{x\}$ contain a fort. Therefore, $B'$ is a minimal zero forcing set of $H$. Since, $|B'| > \Z(G) = \Z(H)$, $\zbar(H) > \Z(H)$.

Finally, assume that $G$ has at least two isolated vertices $u$ and $w$. In this case, $\Z(H) = \Z(G) - 1$. Note that $u,w \in B'$ and let $B'' = B' \setminus \{w\}$. To see that $B''$ is a zero forcing set of $H$, observe that $u$ can force $v$, $(B'\setminus \{w\})\cup \{v\}$ is a zero forcing set of $H - w$, and once $V(H) \setminus \{w\}$ is blue, $v$ can force $w$. Also, $|B''| = |B'| - 1 > \Z(G) - 1 = \Z(H)$. It remains to show that $B''$ is a minimal zero forcing set of $H$. First note that every pair $P = \{a,b\}$ of isolated vertices in $G$ is a fort of $H$ because no vertex in $H$ is adjacent to exactly one vertex in $P$. Thus, deleting an isolated vertex of $G$ from $B''$ does not create a zero forcing set of $H$. Similar to the previous cases, if $x \in B''$ is not an isolated vertex of $G$ and $C$ is the component of $G$ that contains $x$, then $(V(C) \setminus B'') \cup \{x\}$ contains a fort. Therefore, $B''$ is a minimal zero forcing set of $H$ which means $\zbar(H) > \Z(H)$.
\end{proof}

Theorems \ref{thrm:addingUniversals} and \ref{thrm:deletingUniversals} provide some interesting insight into the structure of the graphs that satisfy $\Z(G) = \zbar(G)$. For instance, we can define the poset $(\mathcal{G}, \preceq)$ where $\mathcal{G}$ is the set of graphs with $\Z(G) = \zbar(G)$ and for each $G,H \in \mathcal{G}$, $G \preceq H$ if and only if $H \cong G \vee K_1$. The poset $(\mathcal{G}, \preceq)$ could be a useful way to study the property $\Z = \zbar$. For example, consider the lengths of various chains in $(\mathcal{G}, \preceq)$. We have found many examples of infinitely long chains in this poset. The following graphs are minimal elements of such chains: $K_1$, $\overline{K}_n$, and $C_n$. On the other hand, since the graph $G_n$ in Figure \ref{fig:addUniversalCounterex} has no universal vertex, Theorem \ref{thrm:addingUniversals} also demonstrates that there are infinitely many chains in $(\mathcal{G}, \preceq)$ that only contain one graph each (nameley, $G_n$). Interestingly, we have not found a finite chain in $(\mathcal{G}, \preceq)$ with more than one graph and we leave this question open.

While it is difficult to give a full structural description of the graphs with $\zbar(G) = \Z(G)$, maximum minimal zero forcing sets can be described in terms of forts. Recall that a fort of a graph $G$ is a subset $S \subset V(G)$ such that no vertex in $V(G) \setminus S$ has exactly one neighbor in $S$. If $\mathcal{B}(G)$ is the collection of all forts of $G$, a \emph{cover} of $\mathcal{B}(G)$ is a set $S$ that intersects each fort in $\mathcal{B}(G)$. A \emph{minimal cover} of $\mathcal{B}(G)$ is a cover that does not contain another cover as a proper subset.

\begin{prop}
A set $S$ is a minimal zero forcing set of a graph $G=(V,E)$ if and only if $S$ is a minimal cover of $\mathcal{B}(G)$.
\end{prop}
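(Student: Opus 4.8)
The plan is to prove the stronger statement that the zero forcing sets of $G$ are \emph{exactly} the covers of $\mathcal{B}(G)$; the claim about minimal elements then follows at once, since in both cases minimality is taken with respect to inclusion.

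One inclusion is already in hand: by the result of \cite{brimkov_zf1} recalled in the preliminaries, every zero forcing set of $G$ intersects every fort of $G$, so every zero forcing set is a cover of $\mathcal{B}(G)$. For the converse I would argue by contrapositive. Suppose $S \subseteq V$ is not a zero forcing set, and set $W := V \setminus cl(S)$, which is then non-empty. I claim $W$ is a fort. Indeed, $cl(S)$ is closed under the color change rule, so no vertex of $cl(S)$ has exactly one white neighbor; since the white vertices are exactly those in $W$ and $V \setminus W = cl(S)$, this says precisely that no vertex of $V \setminus W$ has exactly one neighbor in $W$, which is the defining property of a fort. Because $S \subseteq cl(S)$, we have $S \cap W = \emptyset$, so $S$ does not cover the fort $W$ and hence is not a cover of $\mathcal{B}(G)$. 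This establishes that the family of zero forcing sets of $G$ coincides with the family of covers of $\mathcal{B}(G)$.

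Finally, I would conclude: $S$ is a minimal zero forcing set of $G$ if and only if $S$ is a zero forcing set no proper subset of which is a zero forcing set, if and only if (applying the equivalence just proved to $S$ and to each of its subsets) $S$ is a cover of $\mathcal{B}(G)$ no proper subset of which is a cover, if and only if $S$ is a minimal cover of $\mathcal{B}(G)$.

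I do not expect a genuine obstacle here; the one place that needs a moment's care is the verification that $V \setminus cl(S)$ is a fort, i.e., correctly matching the condition ``$cl(S)$ is closed under the color change rule'' with the definition of a fort. The rest is a formal unwinding of the definition of inclusion-minimality once the two families of sets are known to agree.
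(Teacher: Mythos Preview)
Your proof is correct and uses essentially the same ingredients as the paper: the citation to \cite{brimkov_zf1} for ``zero forcing set $\Rightarrow$ cover'' and the observation that $V\setminus cl(S)$ is a fort for ``cover $\Rightarrow$ zero forcing set.'' The only difference is organizational: you establish the global equivalence between zero forcing sets and covers of $\mathcal{B}(G)$ first and then pass to minimal elements, whereas the paper embeds both halves of that equivalence inside separate contradiction arguments for the two directions of the minimality statement.
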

\begin{proof}
Let $S$ be a minimal zero forcing set of $G$. It was shown in \cite{brimkov_zf1} that every zero forcing set intersects every fort, so $S$ is a cover of $\mathcal{B}(G)$. Suppose for contradiction that $S$ contains a smaller cover $S'$ as a proper subset. If $S'$ is not a zero forcing set, then $cl(S') \neq V$. If any vertex $u\in cl(S')$ is adjacent to exactly one vertex $v \in V\backslash cl(S')$, then $u$ could force $v$, contradicting the definition of $cl(S')$. Thus, $V \backslash cl(S')$ is a fort, and it does not contain any vertex of $S'$, which contradicts $S'$ being a cover. Therefore, $S'$ is a zero forcing set of $G$. However, this contradicts the assumption that $S$ is a minimal zero forcing set. 

Conversely, let $S$ be a minimal cover of $\mathcal{B}(G)$. It was shown in \cite{brimkov_zf1} that $S$ is a zero forcing set of $G$. Suppose for contradiction that $S$ contains a smaller zero forcing set $S'$ as a proper subset. If $S'$ is not a cover of $\mathcal{B}(G)$, 
then there exists a fort $F$ which does not contain any element of $S'$. In order for the first vertex $v$ of $F$ to be forced, at some timestep $v$ must be the only neighbor of some blue vertex outside $F$. However, since $F$ is a fort, any vertex outside $F$ which is adjacent to $v$ is also adjacent to another white vertex
in $F$. Thus, $v$ cannot be forced, which contradicts $S'$ being a zero forcing set. Therefore, $S'$ is a cover of $G$. However, this contradicts the assumption that $S$ is a minimal cover. 
\end{proof}

\noindent When studying the structure of minimal zero forcing sets, it is useful to consider how minimal zero forcing sets intersect. The following proposition concerns vertices that appear in every minimal zero forcing set of a given graph.
\begin{prop}
Let $G=(V,E)$ be a graph and $v\in V$. Every minimal zero forcing set of $G$ contains $v$ if and only if $v$ is an isolate. 
\end{prop}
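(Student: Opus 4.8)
The plan is to prove both directions of the biconditional, with the easy direction being that if $v$ is an isolate then $v$ lies in every minimal zero forcing set. For this, observe that $\{v\}$ is a fort of $G$ (no vertex is adjacent to $v$ at all, hence none is adjacent to exactly one vertex of $\{v\}$), so by the result of \cite{brimkov_zf1} cited in the excerpt every zero forcing set must intersect $\{v\}$, i.e.\ must contain $v$; in particular every minimal zero forcing set contains $v$.

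For the converse, I would prove the contrapositive: if $v$ is not an isolate, then there is some minimal zero forcing set of $G$ that does not contain $v$. Let $S$ be any minimal zero forcing set of $G$. If $v \notin S$ we are done, so assume $v \in S$. Since $v$ has a neighbor, I would like to argue that $v$ can be ``removed and replaced'' — more precisely, that a reversal of $S$ (as defined in the preliminaries) or some other minimal zero forcing set avoids $v$. The cleanest route: take a chronological list of forces for $S$ and consider whether $v$ performs a force. If $v$ never performs a force, then $v$ is in the terminus of that set of forces, so the terminus is a zero forcing set (a reversal of $S$) of the same size; but this still contains $v$, so that alone is not enough. Instead I would use the following idea: since $v$ is not an isolate, pick a neighbor $w$ of $v$; I claim that from the zero forcing set $S$ one can build a zero forcing set not containing $v$ by a local modification, and then shrink it to a minimal one inside $V \setminus \{v\}$.

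Concretely, here is the key step I expect to carry the proof. Consider the set $S^* = (V \setminus \{v\})$; this is trivially a zero forcing set whenever $n \ge 2$ (all-but-one vertices always force the last, since every remaining... wait, this needs $v$ to have a neighbor, which it does). Actually more carefully: $V \setminus \{v\}$ is a zero forcing set precisely because $v$ has at least one neighbor $w$, and $w$ together with all of $V\setminus\{v\}$ has $v$ as its unique white neighbor, so $w$ forces $v$. Hence $V \setminus \{v\}$ is a zero forcing set not containing $v$. Now let $S'$ be any minimal zero forcing set contained in $V \setminus \{v\}$ (such an $S'$ exists since every zero forcing set contains a minimal one). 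Then $S'$ is a minimal zero forcing set with $v \notin S'$, completing the contrapositive.

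The main obstacle — or rather the point requiring care — is just the observation that $V\setminus\{v\}$ is a zero forcing set when $v$ is non-isolated, and then invoking the standard fact that every zero forcing set contains a minimal one; both are routine, so in fact this statement is quite short. One should double-check the degenerate cases (e.g.\ $n=1$, where the single vertex is an isolate and both sides hold vacuously/trivially), but no genuine difficulty arises. If the authors prefer a fort-based argument symmetric to the forward direction, the alternative is: $v$ is in every minimal zero forcing set iff $\{v\}$ is the unique minimal cover forced to include $v$, which happens iff $\{v\}$ itself is a fort, iff $v$ is an isolate; this uses the preceding proposition characterizing minimal zero forcing sets as minimal covers of $\mathcal{B}(G)$.
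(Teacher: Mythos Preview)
Your argument is correct, though the writeup meanders before arriving at the clean idea. The forward direction matches the paper's. For the converse, the paper argues differently: it starts from a \emph{minimum} zero forcing set $S$ containing $v$, takes a chronological list of forces $L$, and arranges (possibly after a local modification of $L$) that $v$ performs a force; the terminus of this forcing sequence is then a minimum zero forcing set not containing $v$, yielding a contradiction.

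Your route is more elementary: once $v$ has a neighbor, $V\setminus\{v\}$ is trivially a zero forcing set, and shrinking it to a minimal one gives a minimal zero forcing set avoiding $v$. This bypasses the chronological-list and terminus machinery entirely. The trade-off is that the paper's argument actually produces a \emph{minimum} zero forcing set avoiding $v$, a slightly stronger conclusion than the proposition requires; your argument only guarantees a minimal one, which is exactly what is needed. You should trim the abandoned reversal discussion from your writeup, since it is not used.
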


\begin{proof}
Clearly, since isolates must be contained in every zero forcing set of $G$, they must also be contained in every minimal zero forcing set of $G$. Suppose a non-isolate vertex $v$ is contained in every minimal zero forcing set of $G$. Let $S$ be an arbitrary minimum zero forcing set of $G$ (and hence also a minimal zero forcing set). Since $S$ is minimal and $v$ is not an isolate, there are neighbors of $v$ that are not in $S$. Let $L$ be a chronological list of forces of $S$. If $v$ forces a vertex in $L$, then the terminus of the set of forces in $L$ is a minimum zero forcing set that does not contain $v$. If $v$ does not force a vertex in $L$, let $L'$ be a chronological list of forces that is identical to $L$ except that in the step where the last white neighbor $w$ of $v$ is forced by some vertex $u$, instead $v$ forces $w$. Then, the terminus of the set of forces in $L'$ is a minimum zero forcing set that does not contain $v$. In both cases, the terminus is a zero forcing set of $G$ that has the same cardinality as $S$ and is therefore minimum (and hence minimal), which contradicts that $v$ is contained in every minimal zero forcing set. 
\end{proof}
\noindent Finally, while there are many graphs with $\zbar(G) = \Z(G)$, there are also graphs with a large gap between $\zbar(G)$ and $Z(G)$. In the following proposition, we show that for a graph $G$ of order $n$, $\zbar(G) - \Z(G)$ can be $\Omega(n)$, and in fact can be almost equal to $n$.

\begin{prop}\label{prop:zbarMinusZ}
There are infinitely many graphs $G$ such that $\zbar(G)-\Z(G) = n-7$.
\end{prop}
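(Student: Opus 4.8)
The plan is to exhibit an explicit infinite family $\{G_n\}$ together with a fixed constant $k$ such that $\Z(G_n)=k$ and $\zbar(G_n)=n-(7-k)$, which gives the claimed identity immediately. Note first that any graph witnessing $\zbar(G)-\Z(G)=n-7$ must have $\Z(G)\le 7$ (since $\zbar(G)\le n$ and $\zbar(G)\ge\Z(G)$), and $\Z\le 1$ forces $G$ to be a path, for which $\zbar$ is also bounded; so the family must have bounded zero forcing number while $\zbar(G_n)$ stays within a constant of $n$, i.e.\ $k\in\{2,3,4,5\}$. I would therefore build $G_n$ from a bounded ``core'' $R$ of special vertices and $n-|R|$ ``ordinary'' vertices arranged along a path (the path layout being used so that the ordinary vertices are not twins of one another — twin pairs would inflate $\Z$ with $n$), with each ordinary vertex joined to a prescribed subset of the core and a few carefully placed extra edges, tuned so that $R$ is a \emph{maximal fort-free set} (it contains no fort, but adjoining any ordinary vertex $v$ produces a small fort $F\subseteq R\cup\{v\}$) while the core still contains two non-twin vertices whose removal does not stop the forcing.

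First I would pin down $\Z(G_n)=k$. The upper bound is a direct exhibition: take the few core vertices that must lie in every zero forcing set, together with two consecutive ordinary vertices; one of those then forces the last uncovered core vertex and the forcing cascades down the path. For the lower bound I would list enough pairwise ``independent'' forts — a couple of (near-)twin pairs inside the core and the ``$\{x\}\cup(\text{all ordinary vertices})$''-type forts — and observe, using that every zero forcing set meets every fort (\cite{brimkov_zf1}), that intersecting all of them already costs $k$ vertices; if the core is realized as a join with a small complete or empty graph, \cite[Lemma 4.3]{cospectral} can be used here as well.

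Next I would produce the large minimal zero forcing set $S=V(G_n)\setminus R$. That $S$ is a zero forcing set is a cascade argument: since $R$ contains no fort, some vertex outside $R$ sees exactly one vertex of $R$ and forces it, and iterating recovers all of $R$. For minimality I would use the characterization that a set is a minimal zero forcing set iff it is a minimal cover of $\mathcal{B}(G_n)$: deleting any $u\in S$ leaves $V(G_n)\setminus(R\cup\{u\})$, and by construction $R\cup\{u\}$ contains a fort — a small $R\cup\{v\}$-fort if $u$ is ordinary, one of the near-twin forts if $u$ is a core vertex of $S$ — so $S\setminus\{u\}$ is not a zero forcing set.

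The main obstacle is the matching bound $\zbar(G_n)\le n-(7-k)$: no minimal zero forcing set is larger than $S$. I would argue exactly as in the proof of Theorem~\ref{thrm:addingUniversals}: suppose $B$ is a minimal zero forcing set with $|B|$ too large, let $v$ be the first vertex of $B$ to perform a force, so that $v$ together with all but one of its neighbors lies in $B$; a case analysis over the boundedly many ``types'' of $v$ (each kind of core vertex, an ordinary vertex near an end of the path, a generic ordinary vertex) shows that $B$ properly contains one of a short fixed list of zero forcing sets, contradicting minimality. Getting this analysis to close — in particular ruling out a minimal zero forcing set that uses a long stretch of the path as its reservoir — is the delicate step, and it is precisely what dictates the local structure of the core and, through it, the appearance of the constant $7$.
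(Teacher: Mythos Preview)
Your proposal is a strategy, not a proof: you never actually write down a single graph $G_n$. Everything hinges on the existence of a ``core'' $R$ with a long list of properties (fort-free, adjoining any ordinary vertex creates a fort, contains enough near-twin pairs to pin $\Z$, yet admits a first-force case analysis that closes), and you simply assert that such a core can be found. Until you produce it and verify each property, there is nothing to check. The paper, by contrast, just exhibits $G_n=(2K_2)\vee P_{n-4}$: the four $2K_2$-vertices together with one path leaf force everything, so $\Z(G_n)=5$, and the path vertices together with one vertex from each $K_2$ form a minimal zero forcing set of size $n-2$, so $\zbar(G_n)\ge n-2$.

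The more important point is that you are making the upper bound on $\zbar$ far harder than it needs to be. You propose a first-force case analysis in the style of Theorem~\ref{thrm:addingUniversals} and flag it as ``the delicate step''; in the paper no such analysis is needed at all. Proposition~\ref{zbarnminus1} already characterizes the graphs with $\zbar(G)=n-1$ as $K_m\du kK_1$, so for any connected graph that is not complete one gets $\zbar(G)\le n-2$ for free. That single observation replaces your entire case analysis, and it is what makes the constant $7=(n-2)-5$ fall out with no work. If you want to salvage your outline, the fix is to choose the core so that $G_n$ is visibly outside the $K_m\du kK_1$ family, take $|R|=7-k$, and then invoke Proposition~\ref{zbarnminus1} instead of the case split---which is exactly what the paper does with $R$ equal to one vertex from each $K_2$ and $k=5$.
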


\begin{proof}
Let $G_n$ be the graph $(2K_2)\vee P_{n-4}$ for each $n \geq 7$; see Figure \ref{fig:nminus7} for an illustration. Every vertex in $P_{n-4}$ together with one vertex from each $K_2$ forms a minimal zero forcing set of size $n-2$.
Since $G_n\not\cong K_m \cup kK_1$ for $k\geq 2$ and $m = n - k$, it follows from Proposition \ref{zbarnminus1} that $\zbar(G_n)\neq n-1$. Thus, $\zbar(G_n)=n-2$. On the other hand, every vertex in $2K_2$ together with a leaf in $P_{n-4}$ forms a minimum zero forcing set of size $5$. Thus, $\zbar(G)-\Z(G) = (n-2)-5=n-7$.
\end{proof} 
\begin{figure}[H]
\centering
\scalebox{1}{\includegraphics{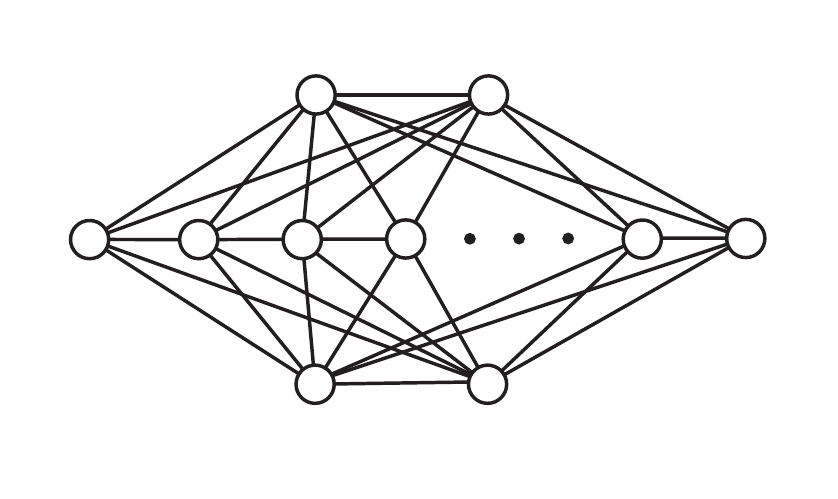}}
\caption{A graph $G_n$ on $n\geq $ vertices with $\zbar(G_n)-\Z(G_n)=n-7$.}\label{fig:nminus7}
\end{figure}

\section{Concluding remarks and future work}
\label{conclusion}
In this paper, we studied the structure, number, and maximum size of the minimal zero forcing sets of a graph. In Section \ref{sec:number}, we investigated the effect of several graph properties, like acyclicity and vertex transitivity, on the number of minimal zero forcing sets. Proposition~\ref{propproduct} showed a family of dense connected vertex transitive graphs that have an exponential number of minimal zero forcing sets. It is an open question to determine whether there is a family of sparse connected vertex transitive graphs that have exponentially many minimal zero forcing sets. To tackle this question, it could help to further understand the graphs with a polynomial number of minimal zero forcing sets. Therefore, determining which families of graphs have this property is also useful. 

It would also be interesting to further investigate when the minimal zero forcing sets of a graph can be found or counted in polynomial time. In particular, if a graph is known to have a polynomial number of minimal zero forcing sets, can all these sets be listed in polynomial time? Moreover, given a graph $G$ and a zero forcing set $B \subset V(G)$, when can the smallest minimal zero forcing set contained in $B$ be found in polynomial time? Note that answering this question for $B=V(G)$ is equivalent to finding the zero forcing number and is therefore NP-Hard.

In Section \ref{sec:max_min}, we focused on $\zbar(G)$, the maximum size of a minimal zero forcing set of a graph $G$. Generally, it seems nontrivial to find $\zbar(G)$, but the exact computational complexity is still unknown. Therefore, it would be interesting to determine whether $\zbar(G)$ can always be computed in polynomial time, or whether computing it is NP-Hard. Extablishing conditions which guarantee that $Z(G)=\zbar(G)$ is also a question of interest. Finally, in Proposition \ref{prop:zbarMinusZ}, we produced a family of graphs with $\zbar(G) - \Z(G) = n -7$; we leave it as an open question to determine the largest possible gap between $\zbar(G)$ and $\Z(G)$.

\end{document}